\journal{Computer Methods in Applied Mechanics and Engineering (CMAME)}
\pgfplotsset{compat=1.16}
\newsavebox\Axis
\definecolor{lightgray}{gray}{0.80}
\newtcolorbox{lbracebox}[1][Word]{%
   frame hidden,enlarge left by=2cm,width=\linewidth-2cm,%
  overlay unbroken = {\draw [decorate,decoration={brace,amplitude=10pt},]%
                     (frame.south west)-- (frame.north west)
                    node [black,midway,left,xshift=-.6cm] {#1};},%
}
\newcommand{\Bezier}{B\'ezier~}
\def\B{}
\def\Bl{}
\definecolor{grey1}{rgb}{0.5, 0.5, 0.5}
\definecolor{green1}{rgb}{0.4660, 0.6740, 0.1880} 
\definecolor{blue1}{rgb}{0, 0.4470, 0.7410} 
\definecolor{red1}{rgb}{0.8500, 0.3250, 0.0980}
\definecolor{yellow1}{rgb}{0.9290, 0.6940, 0.1250}
\definecolor{purple1}{rgb}{0.4940, 0.1840, 0.5560}
\definecolor{lightblue1}{rgb}{0.3010, 0.7450, 0.9330}
\definecolor{bordeaux1}{rgb}{0.6350, 0.0780, 0.1840}
\definecolor{burntorange}{rgb}{0.74902,0.341176,0}
\theoremstyle{plain}
\newtheorem{theorem}{Theorem}[section]
\newtheorem{proposition}[theorem]{Proposition}
\newtheorem{lemma}[theorem]{Lemma}
\newtheorem{corollary}[theorem]{Corollary}
\theoremstyle{definition}
\newtheorem{definition}[theorem]{Definition}
\newcommand{\vect}[1]{\boldsymbol{#1}} 								
\newcommand{\mat}[1]{\boldsymbol{#1}} 								
\newcommand{\mmat}[1]{\mathbf{#1}} 									
\newcommand{\List}[1]{\left( \right. #1 \left.\right) }				
\newcommand{\bilinearform}[3]{#1(#2, #3)} 							
\newcommand{\pairing}[2]{\langle#1, #2 \rangle} 					
\newcommand{\grammian}{G}													
\newcommand{\SPD}[1]{\mathbb{SPD}^{#1}} 							
\newcommand{\nsd}{d}																
\newcommand{\domain}{\Delta}													
\newcommand{\Domain}{\Omega}												
\newcommand{\ParamDomain}{\hat{\Domain}}									
\newcommand{\nelms}{\delta}													
\newcommand{\ndofs}{n}															
\newcommand{\N}{N}																	
\newcommand{\x}{x}																	
\newcommand{\p}{p}																	
\renewcommand{\r}{r}																
\newcommand{\bp}[1]{\ensuremath{\param_{#1}}}
\newcommand{\knot}[1]{\ensuremath{t_{#1}}}
\newcommand{\param}{\ensuremath{x}}
\newcommand{\knots}{\ensuremath{\Xi}} 									
\newcommand{\bspline}[1]{B_{#1}}											
\newcommand{\dual}[1]{\tilde{B}_{#1}}									
\newcommand{\approxdual}[1]{\hat{B}_{#1}}							
\newcommand{\idx}[1]{\textrm{#1}}											
\newcommand{\trialfun}[1]{N_{\textrm{#1}}}							
\newcommand{\testfun}[1]{N_{\textrm{#1}}}	    					
\newcommand{\dualtestfun}[1]{\hat{N}_{\textrm{#1}}}	    	
\newcommand{\funspace}{\mathcal{V}}										
\newcommand{\sspace}[2]{\mathbb{S}^{#1}_{#2}}				
\newcommand{\pspace}[1]{\mathbb{P}^{#1}}							
\newcommand{\Span}[1]{\text{span} \left( #1 \right)}				
\begin{document}

\begin{frontmatter}

\title{Higher order accurate mass lumping for explicit isogeometric methods based on approximate dual basis functions}

\corref{cor1}
\author[address1]{Ren\'e R. Hiemstra}
\ead{hiemstra@mechanik.tu-darmstadt.de}

\author[address1]{Thi-Hoa Nguyen}
\ead{nguyen@mechanik.tu-darmstadt.de}

\author[address3]{Sascha Eisentr\"ager}
\ead{sascha.eisentraeger@ovgu.de}

\author[address2]{Wolfgang Dornisch}
\ead{wolfgang.dornisch@b-tu.de}

\author[address1]{Dominik Schillinger}
\ead{schillinger@mechanik.tu-darmstadt.de}

\cortext[cor1]{Corresponding author}
\address[address1]{Institute for Mechanics, Computational Mechanics Group, Technical University of Darmstadt, Germany}

\address[address2]{Chair of Structural Analysis and Dynamics, Brandenburg University of Technology Cottbus-Senftenberg, Germany}

\address[address3]{\Bl Institute of Mechanics, Chair of Computational Mechanics, Otto von Guericke University Magdeburg, Germany \B}

\begin{abstract}
This paper introduces a mathematical framework for explicit structural dynamics, employing approximate dual functionals and rowsum mass lumping. We demonstrate that the approach may be interpreted as a Petrov-Galerkin method that utilizes rowsum mass lumping or as a Galerkin method with a customized higher-order accurate mass matrix. Unlike prior work, our method correctly incorporates Dirichlet boundary conditions while preserving higher order accuracy. The mathematical analysis is substantiated by spectral analysis and a two-dimensional linear benchmark that involves a non-linear geometric mapping. Our results reveal that our approach achieves accuracy and robustness comparable to a traditional Galerkin method employing the consistent mass formulation, while retaining the explicit nature of the lumped mass formulation.
\end{abstract}


\begin{keyword}
Isogeometric analysis \sep explicit dynamics \sep mass lumping \sep customized mass matrix \sep dual functionals \sep spectrum analysis
\end{keyword}

\end{frontmatter}



\section{Introduction}
Explicit methods for structural dynamics advance or update the solution at each time increment according to an evolution equation represented by the matrix problem
\begin{align}
	\mmat{M}\, \ddot{\vect{u}}_{n+1} = \vect{F}(\vect{u}_n, \dot{\vect{u}}_n, \ddot{\vect{u}}_n) \; .
	\label{eq:explicit}
\end{align}
Here, $\mmat{M} \in \mathbb{R}^{\N \times \N}$ is the mass matrix (which is a sparse, symmetric and positive definite matrix), $\ddot{\vect{u}}_{n+1} \in \mathbb{R}^{\N}$ is a vector of unknown accelerations at time $t_{n+1}$, and $\vect{F} \in \mathbb{R}^\N$ encapsulates the interior and exterior forces, which are a function of the known displacement, $\vect{u}_n$, velocity, $\dot{\vect{u}}_n$, and acceleration,  $\ddot{\vect{u}}_n$, at time $t_{n}$.

State-of-the-art commercial solvers such as LS-Dyna\footnote{\url{https://www.ansys.com/products/structures/ansys-ls-dyna}}, RADIOSS\footnote{\url{https://altair.com/radioss}}, or PAM-CRASH\footnote{\url{https://www.esi-group.com/pam-crash}} are all based on the linear finite element method \cite{belytschko1984explicit,benson1992computational,hughes2012finite} in conjunction with explicit second-order accurate time integration methods such as the central difference method or variations thereof \cite{hughes2012finite}. In typical applications such as the analysis of vehicle crashworthiness and the simulation of metal forming or stamping, the internal force vector of the discrete system \eqref{eq:explicit} involves shell elements, nonlinear material models, and contact with friction, for which the linear finite element provides a well-developed technological setting. A key component is the replacement of the consistent mass matrix $\mmat{M}$ with a diagonal matrix, $\mmat{M}_L$, obtained by ``mass lumping''. It results in a simple and highly computationally and memory efficient solution update, while maintaining optimal spatial accuracy of the linear finite element method. In addition, the lumped mass matrix reduces the highest frequencies of the discretization, thereby increasing the critical time-step size. These codes therefore achieve an extreme level of computational and memory efficiency.

Initiated in 2005, isogeometric analysis (IGA) \Bl aims at bridging the gap \B between computer aided geometric design and finite element analysis \cite{cottrell_isogeometric_2009,hughes_isogeometric_2005}. The core idea of IGA is to use the same \textit{smooth} and \textit{higher-order} spline basis functions for the representation of both geometry and the approximation of physics-based field solutions. For a more detailed introduction to IGA, we refer interested readers to the recent review articles \cite{Haberleitner:17.1,Hughes:17.1,marussig2018review,Schillinger:18.1} and the references \Bl cited \B therein. Due to the higher-order nature of splines, IGA is particularly attractive for higher-order accurate structural analysis. For elastostatic-type problems, higher-order IGA is superior to standard FEA in terms of per-degree-of-freedom accuracy, as shown e.g.\ in \cite{Schillinger:13.2}.

A key property of higher-order IGA, already discussed in one of the first articles \cite{cottrell_isogeometric_2006}, is its well-behaved discrete spectrum of eigenfrequencies and eigenmodes. The associated potential of IGA for efficient explicit dynamics, however, lies largely idle to this day.  One of the main barriers is the lack of a diagonalization scheme for the mass matrix that enables a computationally and memory efficient update, but also maintains the higher-order spatial accuracy of the method \cite{cottrell_isogeometric_2006,hartmann2015mass}. For instance, Hartmann and Benson reported in \cite{hartmann2015mass} that ``increasing the degree on a fixed mesh size increases the cost without a commensurate increase in accuracy''. Therefore, studies on isogeometric explicit dynamics schemes that have adopted standard row-sum lumping to diagonalize the mass matrix have focused on lower-order quadratic spline discretizations \cite{Benson:10.1,chen2014explicit,leidinger2019explicit}. 

One idea to bridge this gap is to apply additional corrector updates that restore higher-order accuracy \cite{Evans_corrector_scheme_2018} at low memory requirements, but lead to an (unacceptable) increase of the computational cost. A recent study \cite{voet2023masslumping} explored ideas from preconditioning to develop certain customized mass matrices that approximate the consistent mass matrix while retaining some of the beneficial properties of lumped mass. However, the presented methods have not led to a higher order accurate technology. For standard $C^0$-continuous basis functions, a pathway to higher-order accurate explicit dynamics with diagonal mass matrices is the adoption of \Bl Lagrange basis functions collocated at the Gauss-Lobatto Legendre (GLL) nodes \B \cite{Canuto:07.1, duczek_lumping_sem2019,Schillinger2014,Willberg:12.1}. Due to the interpolatory property of nodal basis functions, this strategy naturally yields a consistent and higher-order accurate diagonal mass matrix. Another idea is therefore the transfer of this mechanism to isogeometric analysis via a change of basis in the context of Lagrange extraction \cite{nguyen2017collocated,schillinger2016lagrange}. But the associated matrix operations, even when accelerated through special techniques, increase the computational cost and the memory requirements beyond an acceptable level. 

Another approach to achieve a diagonal mass matrix is the adoption of dual spline functions. This set of functions is defined bi-orthogonal (or ``dual'') to a set of standard B-splines (or NURBS), such that the inner product of the two integrated over a domain yields a diagonal matrix. Hence, if a dual basis is used as the test space in a finite element formulation, we naturally obtain a consistent diagonal mass matrix \cite{Anitescu_dual_2019}. In CAD research, this bi-orthogonality property has been well known for decades, and several approaches for the construction of the underlying dual basis exist, see e.g.\ \cite{schumaker_spline_2007}. In the computational mechanics community, this property has been used for different purposes, e.g.\ for dual mortar methods 
\cite{Dornisch_dual_basis_2017, seitz_mortar_2016, zou_mortar_2018} or for unlocking of Reissner-Mindlin shell formulations \cite{zou_dual_locking_2020}.

Dual spline basis functions still suffer from shortcomings when applied as discrete test functions in elastodynamics as envisioned in \cite{Anitescu_dual_2019}, to the extent that their use in explicit dynamics seems not straightforward (as recently summarized in \cite{nguyen2023}): Firstly, bi-orthogonality in general holds only on the parametric domain, but is in general lost under non-affine geometric mapping. Secondly, dual spline functions with the same smoothness as their B-spline counterparts have global support on each patch, thus producing fully populated stiffness forms. Their support can be localized, but at the price of losing continuity, leading to discontinuous functions in the fully localized case. Both are prohibitive in a finite element context. And thirdly, dual basis functions are not interpolatory at the boundaries. Therefore, the identification of a kinematically admissible set of test functions that allows the variationally consistent strong imposition of Dirichlet boundary conditions is not straightforward.

In a recent paper \cite{nguyen2023}, we presented an isogeometric Petrov-Galerkin method for explicit dynamics that overcomes the first two of the three challenges. Its key ingredient is a class of ``approximate'' dual spline functions, introduced in \cite{chui2004nonstationary} and successfully applied within IGA for dual mortaring in \cite{Dornisch_dual_basis_2017}. It only approximately satisfies the discrete bi-orthogonality property, but preserves all other properties of the original B-spline basis, such as its smoothness, polynomial reproduction and local support. Employing the approximate dual functions to discretize the test function space, we obtain a semidiscrete Petrov-Galerkin formulation in the format \eqref{eq:explicit}, whose mass matrix $\boldsymbol{M}$ is not truly diagonal, but a ``close'' approximation of a diagonal matrix. We showed that we can then apply standard rowsum mass lumping to diagonalize the approximately diagonal mass matrix without compromising higher-order spatial accuracy. \Bl A similar study was recently presented in \cite{held2023efficient}, also with promising results. \B

In this paper, we build on and extend our results \Bl presented \B in \cite{nguyen2023}, with the goal of providing a comprehensive mathematical framework of approximate dual basis functions, including a consistent method to strongly incorporate Dirichlet boundary conditions. In Section \ref{sec:background}, we set the stage by fixing a general linear second order model problem, which we discretize in space via the Galerkin method and in time via higher-order explicit time integration. We review different classical ways of performing mass lumping. In Section \ref{sec:approxduals}, we focus on a univariate basis for splines that is approximately dual to B-splines in the $L^2$-norm and discuss their properties. We demonstrate that the Petrov-Galerkin method with rowsum mass lumping proposed in \cite{nguyen2023} can also be interpreted as a Galerkin method with a customized mass matrix. Based on this observation, we show that the customized mass technique does not have a negative effect on the asymptotic accuracy of the method. Another important extension with respect to our work in \cite{nguyen2023} is the incorporation of Dirichlet boundary conditions. We show that polynomial accuracy is maintained under strong imposition of boundary conditions. In Section \ref{sec:multivariate}, we extend our developments to the multivariate setting, explain how the properties of the basis can be preserved under general non-affine mappings, and discuss key aspects of an efficient computer implementation. In Section \ref{sec:results}, we verify accuracy and robustness of our methodology in the context of spectral analysis and a two-dimensional linear explicit analysis benchmark, demonstrating that our Petrov-Galerkin scheme indeed preserves higher-order accuracy in explicit dynamics calculations. In Section \ref{sec:conclusion}, we provide a summary and outline potential future research.
\section{Preliminaries  \label{sec:background}}
In this section we present background and notation used in the sequel of this paper. First, we present a general linear second order model problem. The Galerkin method is used to discretize the equations in space and we briefly discuss explicit time integration. Subsequently, we discuss the consistent mass matrix and different classical ways of performing mass lumping.

\subsection{Model problem}
Let $\Domain \subset \mathbb{R}^{\nsd}$ be an open set with piecewise smooth boundary $\Gamma = \overline{\Gamma_{g} \cup \Gamma_{h}}, \; \Gamma_{g} \cap \Gamma_{h} = \emptyset$. Consider (sufficiently smooth) prescribed boundary and initial data 
\begin{subequations}
\begin{align}
	f \; &: \; \Domain \times (0,T)\mapsto \mathbb{R}, \\
	g \; &: \; \Gamma_{g} \times (0,T) \mapsto \mathbb{R}, \\
	h \; &: \; \Gamma_{h} \times (0,T) \mapsto \mathbb{R}, \\
	u_{0}, \, \dot{u}_{0} \; &: \; \Domain \mapsto \mathbb{R}.
\end{align}
Let $\funspace \equiv H^1(\Domain)$. The space of trial solutions is assumed to be constant in time,
\begin{align}
	\funspace_g &:= \left\{v \in \funspace \text{ with } v(x) = g(x) \; \text{for } x \in \Gamma_{g}   \right\}.
\end{align}
Here, the Dirichlet data, $g$, is built directly into the trial-space, while the Neumann data, \Bl $h$, \B is incorporated weakly through natural imposition.

We consider weak formulations of the following form: given $f, \, g, \, h, u_{0}$ and $\dot{u}_{0}$, find $u(t) \in \funspace_g$ such that for all $w \in \funspace_0$
\begin{align}
	\bilinearform{a}{u}{w} + \bilinearform{b}{\ddot{u}}{w} &= l(w),	\\
	\bilinearform{b}{u(0)}{w} &= \bilinearform{b}{u_0}{w},	\\
	\bilinearform{b}{\dot{u}(0)}{w} &= \bilinearform{b}{\dot{u}_0}{w}.
\end{align}
Here, $a \; : \; \funspace \times \funspace \mapsto \mathbb{R}$ is a positive semi-definite and symmetric bilinear form (definite on the subspace $\funspace_0 \times \funspace_0$) and $b \; : \; \funspace \times \funspace \mapsto \mathbb{R}$ is a positive definite symmetric bilinear form defined as: $b(u,v) =  \int_{\Domain} \rho \, u \, v \, d \Domain$, where $\rho \, : \, \Domain \mapsto \mathbb{R}$ denotes the density of the medium. Furthermore, $l \; : \funspace \mapsto \mathbb{R}$ is a time dependent linear form dependent on the prescribed forces and Neumann boundary data: $l(w) = \int_{\Domain} w \, f  \, d \Domain + \int_{\Gamma_h} w \, h \, d \Gamma$. 
\end{subequations}

The model problem described here can represent different evolutionary processes, such as the time-dependent linear heat equation $(\nsd=1,2,3)$, vibration of a string ($\nsd = 1$), or vibration of a linear membrane ($\nsd = 2$).

\subsection{Semi-discrete equations of motion}
We consider semi-discrete systems of equations obtained using the Galerkin method with a finite dimensional space $\funspace^h \subset \funspace$ and subspace $\funspace^h_0 \subset \funspace_0$ discretized in terms of tensor product spline functions. Let $u^h = v^h + g^h$ where $v^h \in \funspace^h_0$ and $g^h \in \funspace^h$ approximately satisfies the Dirichlet boundary data $g$ on $\Gamma_g$.

The Galerkin formulation reads: find $v^h(t) \in \funspace^h_0$ such that for all $w^h \in \funspace^h_0$
\begin{subequations}
\begin{align}
	\bilinearform{a}{w^h}{v^h} + \bilinearform{b}{w^h}{\ddot{v}^h} &= l(w^h) - \bilinearform{a}{w^h }{g^h} - \bilinearform{b}{w^h}{\ddot{g}^h},	\\
	\bilinearform{b}{w^h}{v^h(0)} &= \bilinearform{b}{w^h}{u_0} - \bilinearform{b}{w^h}{g^h(0)},	\\
	\bilinearform{b}{w^h}{\dot{v}^h(0)} &= \bilinearform{b}{w^h}{\dot{u}_0} - \bilinearform{b}{w^h}{\dot{g}^h(0)}.
\end{align}
\label{eq:galerkin}
\end{subequations}
Consider finite dimensional representations in terms of basis functions $\trialfun{i}(x) \in \funspace^h$
\begin{align}
	v^h(x) = \sum_{\idx{i} \in \eta - \eta_g} \trialfun{i}(x) \,  d_{\idx{i}}(t), \qquad g^h(x) = \sum_{\idx{i} \in \eta_g} \trialfun{i}(x) \, d_\idx{i}(t),
\end{align} 
where $\eta$ is an index set of all functions in $\funspace^h$ and $\eta_g$ is an index set of functions that intersect the boundary. We consider matrices $\mmat{M} = [M_{\idx{ij}}]$ and $\mmat{K} = [K_{\idx{ij}}]$, and vectors $\mmat{F} = [F_{\idx{i}}]$, $\tilde{\vect{d}}_0 = [\tilde{d}_{0\idx{i}}]$, and $\dot{\tilde{\vect{d}}}_0 = [\dot{\tilde{d}}_{0\idx{i}}]$ ($\idx{i}, \idx{j} \in \eta - \eta_g$) defined by
\begin{subequations}
\begin{align}
	M_{\idx{ij}} &:= \bilinearform{b}{\testfun{i}}{\trialfun{j}},	\\
	K_{\idx{ij}} &:= \bilinearform{a}{\testfun{i}}{\trialfun{j}}, \\
	F_{\idx{i}} &:= l(\testfun{i}) - \bilinearform{a}{\testfun{i}}{g^h} - \bilinearform{b}{\testfun{i}}{\ddot{g}^h}, \\
	\tilde{d}_{0\idx{i}} &:= b(\testfun{i}, u_0 - g^h(0)), \\
	\dot{\tilde{d}}_{0\idx{i}} &:= b(\testfun{i}, \dot{u}_0 - \dot{g}^h(0)).
\end{align}
\end{subequations}
Applying these definitions to \eqref{eq:galerkin} leads to the following matrix problem,
\begin{subequations}
\begin{align}
	\mmat{M} \, \vect{\ddot{d}}(t) + \mmat{K} \, \vect{d}(t) &= \vect{F}(t),  \quad t \in (0, T), 	\label{eq:update}\\
	\mmat{M} \, \vect{d}(0) &= \tilde{\vect{d}}_0,	\\
	\mmat{M} \, \vect{\dot{d}}(0) &= \dot{\tilde{\vect{d}}}_0.
\end{align}
\end{subequations}
This is a coupled system of ordinary differential equations (ODE's) for the displacement coefficients $\vect{d}(t):=[d_{\idx{i}}(t)]$, where $\vect{\dot{d}}(t):=[\dot{d}_{\idx{i}}(t)]$ denote velocities and $\vect{\ddot{d}}(t):=[\ddot{d}_{\idx{i}}(t)]$ the accelerations.

The properties of the stiffness matrix, $\mmat{K}$, and the consistent mass matrix, $\mmat{M}$, follow directly from the bilinear forms $a$ and $b$, respectively, and the finite element basis functions $\List{\trialfun{i}, \; \idx{i} \in \eta - \eta_g }$. In particular, the consistent mass matrix $\mmat{M}$ and the stiffness matrix $\mmat{K}$ are both positive definite and symmetric and have a sparse, banded structure due to the local support of the basis functions. In addition, the positivity of B-splines as trial and test functions leads to non-negative entries in the mass matrix.

\subsection{Explicit time stepping methods}
Let $\vect{d}_k, \, \dot{\vect{d}}_{k},$ and $\ddot{\vect{d}}_{k}$ denote certain finite difference approximations of $\vect{d}(t_k)$, $\dot{\vect{d}}(t_{k})$ and $\ddot{\vect{d}}(t_{k})$, respectively, that describe the state at time $t_k \in (0,T)$, and let $\vect{F}_{k} = \vect{F}(t_{k})$. Given the state at a particular time $t_n \in [0,T)$, explicit methods determine the state at a later time $t_{n+1} \in [0,T]$ by replacing the update in \eqref{eq:update} with the matrix problem
\begin{align}
	\mmat{M} \, \ddot{\vect{d}}_{n+1} = \vect{F}_{n+1} - \mmat{K} \, \vect{d}_n.
\end{align}
The stiffness matrix $\mmat{K}$ need not be explicitly formed. Instead, its action on
a vector $\vect{d}_n$ may be implemented in a matrix-free context. A solution update using the consistent mass matrix $\mmat{M}$ still requires a direct matrix solve, which scales $\mathcal{O}(N^2)$ where $N$ is the leading dimension of the system.

\subsection{Efficient update through mass lumping}
Mass lumping refers to a procedure where the consistent mass matrix is somehow approximated by a diagonal matrix of the same dimensions to enable fast explicit solution updates, that is, updates that do not require matrix inversion and scale $\mathcal{O}(N)$. The three important procedures are (see e.g. \cite[Section 12.2.4]{zienkiewicz2013finite}):
\begin{enumerate}
	\item The rowsum method: $M^{\text{lumped}}_{\idx{ii}} := \sum \nolimits_{\idx{j}} M_{\idx{ij}}$.
	\item Diagonal scaling: $M^{\text{lumped}}_{\idx{ii}} = c M_{\idx{ii}}$ with $c$ chosen such that $\sum \nolimits_\idx{j} M^{\text{lumped}}_{\idx{jj}} = \sum \nolimits_\idx{i} \sum \nolimits_{\idx{j}} M_{\idx{ij}} = \int_{\Omega} \rho d \Omega$.
	\item Evaluation of $M_{\idx{ij}}$ using a nodal quadrature rule corresponding to nodal shape functions.
\end{enumerate}
These techniques have proven robust and efficient mainly for linear finite element methods. The third procedure has had success in higher order spectral element methods, see \cite{Schillinger2014}. In this work we use rowsum lumping and show that higher order accuracy can be maintained for a special class of lumped mass matrices in which the test and trial functions are not the same.
\section{Approximate \Bl $L^2$ dual \B bases \label{sec:approxduals}}
In this section we introduce a basis for splines that is approximately dual to B-splines in the $L^2$-norm. In contrast to the exact dual functions, these functions are locally supported, which makes them suitable as test functions in a Petrov-Galerkin method. The method we propose can be interpreted as: (1) a Galerkin method with a customized mass matrix; \Bl or \B (2) a Petrov-Galerkin method with rowsum mass lumping. We show that these perspectives are equivalent and \Bl demonstrate that \B the customized mass / mass-lumping technique does not have a negative effect on the asymptotic accuracy of the method. An important extension with respect to previous work is the \Bl consistent \B incorporation of Dirichlet boundary conditions. We show that polynomial accuracy is maintained under strong imposition of boundary conditions.

\subsection{Univariate B-splines}
A spline is a piecewise polynomial that is characterized by the polynomial degree of its segments and the regularity prescribed at their interfaces. A convenient basis in which to represent polynomial splines is given by
B-splines  (see e.g. \cite{boor_practical_2001}). Consider a partitioning $\domain$ of the univariate interval $[a,b]$ into $\nelms$ elements
\begin{align}
	a = \bp{0} < \bp{1} < \ldots < \bp{\nelms} = b.
\end{align}
With every internal \emph{breakpoint}, $\bp{k}$, we may associate an integer, $\r_k$, that prescribes the smoothness between the polynomial pieces. Then, given the \emph{knot}-multiplicity, $\List{\p-\r_k}_{k=1}^{\nelms-1}$, we can define the \emph{knot vector},
\begin{align}
	\knots = \List{\knot{i}}_{i=1}^{\ndofs+\p+1} := 
	\left\{  \right. 
		\underbrace{\bp{0}, \ldots , \bp{0}}_{\p+1}, \ldots ,
		\underbrace{\bp{k-1}, \ldots , \bp{k-1}}_{\p-\r_{k-1}}, 
		\underbrace{\bp{k}, \ldots , \bp{k}}_{\p-\r_{k}}, \ldots,
		\underbrace{\bp{\nelms}, \ldots , \bp{\nelms}}_{\p+1}
		\left. \right\}
\end{align}
With a knot vector in hand, B-splines are stably and efficiently computed using the Cox–de Boor recursion,
\begin{align}
	\bspline{i,0}(\param) &= 
	\begin{cases} 
		1 \quad \text{if } \param \in [\knot{i}, \knot{i+1}) \\
		0 \quad \text{otherwise}
	\end{cases} \\
	\bspline{i,\p}(\param) &= \frac{\param - \knot{i}}{\knot{i+\p} - \knot{i}} \bspline{i,\p-1}(\param) 
	+ \frac{\knot{i+\p+1} - \param}{\knot{i+\p+1} - \knot{i+1}} \bspline{i+1,\p-1}(\param) 
\end{align}
where, by definition, $0/0 = 0$.

Let $\r = \List{0 \leq \r_{k} \leq \p-1, \; k = 1, \ldots , \nelms-1}$. The space of polynomial splines of degree $\p$ and dimension $\ndofs$ with $\r_k$ continuous derivatives at breakpoint $\bp{k}$ is defined as
\begin{align}
	\sspace{\p}{\r} (\domain) := \Span{\bspline{i,\p} (\param), \; i=1, \ldots, \ndofs}.
\end{align}
To shorten notation we drop the dependence on the domain and simply write $\sspace{\p}{\r}$.

B-splines have important mathematical properties, many of which are useful in design as well as in analysis. B-spline basis functions of degree $\p$ may have up to $\p-1$ continuous derivatives, they form a positive partition of unity, and have local support of up to $\p+1$ elements.

\subsection{$L^2$ dual basis}
Let $\pairing{\cdot}{\cdot} \, : \; L^2(\domain) \times L^2(\domain) \mapsto \mathbb{R}$ denote the standard $L^2$ inner product of two square integrable functions. A set of functions $\List{\dual{i}, \; i=1, \ldots , \ndofs}$ is called a dual basis (in the $L^2$-norm) to the set $\List{\bspline{i}, \; i=1, \ldots , \ndofs}$ if the following properly holds
\begin{align}
	\pairing{\dual{i}}{\bspline{j}} = \delta_{ij}, \quad i,j \in 1, \ldots , \ndofs.
	\label{eq:duality}
\end{align}
Here, $\delta_{ij}$ denotes the Kronecker delta, which is one if $i=j$ and zero otherwise. The one-to-one correspondence implies that the dual functions are linearly independent if and only if the functions $\List{\bspline{i}, \; i=1, \ldots , \ndofs}$ are linearly independent.

A dual basis is not unique. There is freedom in choosing a space spanned by the functions, there are different ways to construct them, and their properties, such as smoothness and support, may vary. If the dual basis functions span the space of splines $\sspace{\p}{\r}$, then they are uniquely defined and can be computed as (duality in  \eqref{eq:duality} may be checked by direct substitution)
\begin{align}
	\dual{i}(\param) = \sum_{j=1}^{\ndofs} \left(\mat{\grammian}^{-1}\right)_{ij} \bspline{j}(\param), \quad \text{where } \grammian_{ij} = \pairing{\bspline{i}}{\bspline{j}}.
	\label{eq:dualbasis}
\end{align}

A Galerkin discretization employing $L^2$ dual functions as test functions and B-splines as trial functions yields the identity matrix as mass matrix. Although this property is highly sought after in explicit dynamics, they are unsuitable as a basis for the test-space. Because the inverse of the Grammian matrix, $\mat{\grammian}^{-1}$, is a dense matrix, the $L^2$ dual functions have global support. This leads to poor scaling and expensive formation and assembly of the interior force vector in explicit dynamics computations, because each entry in the vector depends on all the quadrature points.

Finally, we list a well known property of the dual basis used in the context of $L^2$-projection (Proposition \ref{eq:projection}). In the next subsection we introduce a set of functions that are (in some sense) approximately dual to B-splines, yet maintain local support, and satisfy an approximation property similar to the following.
\begin{proposition}[$L^2$-projection\label{eq:projection}] Let $f \in L^2(\domain)$. The function $u(\param) = \sum_{i=1}^{\ndofs} \pairing{f}{\dual{i}} \, \bspline{i}(\param)$ is the best spline approximation to $f$ in the $L^2$-norm. In other words, it is the minimizer of the convex minimization problem
\begin{align*}
	\underset{u \in \sspace{\p}{\r}(\domain)}{\text{argmin}} \; \tfrac{1}{2} \pairing{u}{u} - \pairing{u}{f}
\end{align*}
\end{proposition}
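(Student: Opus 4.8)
The plan is to verify that the proposed $u$ is precisely the $L^2$-orthogonal projection of $f$ onto $\sspace{\p}{\r}(\domain)$, and then invoke the standard characterization of that projection as the minimizer of the stated quadratic functional. First I would write the candidate as $u = \sum_{i=1}^{\ndofs} \pairing{f}{\dual{i}}\,\bspline{i}$ and compute, for an arbitrary basis function $\bspline{k}$, the residual inner product $\pairing{f - u}{\bspline{k}}$. Expanding $\pairing{u}{\bspline{k}} = \sum_i \pairing{f}{\dual{i}}\,\pairing{\bspline{i}}{\bspline{k}}$ and using the duality relation \eqref{eq:duality} in the form $\pairing{\dual{i}}{\bspline{k}} = \delta_{ik}$ — more precisely, observing from \eqref{eq:dualbasis} that $\pairing{\dual{i}}{\bspline{k}} = \sum_j (\grammian^{-1})_{ij}\,\grammian_{jk} = \delta_{ik}$ — one finds $\pairing{u}{\bspline{k}} = \pairing{f}{\dual{k}}$ is not immediately what is needed; rather the clean route is to compute $\pairing{u}{\dual{k}}$ or, better, to directly show $\pairing{f-u}{w} = 0$ for every $w \in \sspace{\p}{\r}$. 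Writing $w = \sum_k c_k \bspline{k}$ and $f$'s best approximation in terms of the Gram system, the coefficients of the exact $L^2$-projection $P f$ solve $\grammian \vect{c} = [\pairing{f}{\bspline{j}}]_j$, i.e. $c_i = \sum_j (\grammian^{-1})_{ij}\pairing{f}{\bspline{j}} = \pairing{f}{\sum_j (\grammian^{-1})_{ij}\bspline{j}} = \pairing{f}{\dual{i}}$ by \eqref{eq:dualbasis} and symmetry of $\grammian$. Hence $u = Pf$ exactly.

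Next I would establish the variational characterization. Define $J(u) := \tfrac12 \pairing{u}{u} - \pairing{u}{f}$ on the finite-dimensional space $\sspace{\p}{\r}(\domain)$. Since $\pairing{\cdot}{\cdot}$ is an inner product, $J$ is strictly convex and coercive on that subspace, so it has a unique minimizer, characterized by the vanishing of its first variation: for all $w \in \sspace{\p}{\r}(\domain)$, $\left.\tfrac{d}{d\epsilon}\right|_{\epsilon=0} J(u+\epsilon w) = \pairing{u}{w} - \pairing{f}{w} = 0$, that is, $\pairing{u - f}{w} = 0$ for all $w \in \sspace{\p}{\r}(\domain)$. This is exactly the Galerkin orthogonality satisfied by $Pf$, which we verified in the previous step. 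By uniqueness of the minimizer, the two coincide, proving the claim. (Equivalently one can note $J(u) = \tfrac12 \pairing{u-f}{u-f} - \tfrac12\pairing{f}{f}$, so minimizing $J$ is the same as minimizing $\|u-f\|_{L^2}$, which again identifies the minimizer with the orthogonal projection.)

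The main obstacle is essentially bookkeeping rather than anything deep: one must be careful that the $L^2$-projection onto $\sspace{\p}{\r}$ is well defined (which follows from $\grammian$ being symmetric positive definite, itself a consequence of linear independence of the $\bspline{i}$), and that the substitution using \eqref{eq:dualbasis} correctly yields $c_i = \pairing{f}{\dual{i}}$ — this uses the symmetry $\grammian_{ij} = \grammian_{ji}$ so that $(\grammian^{-1})_{ij}$ contracts against $\pairing{f}{\bspline{j}}$ in the right index. No genuine difficulty arises because everything happens in a finite-dimensional inner product space; the result is the classical equivalence between best $L^2$-approximation, Galerkin orthogonality, and minimization of the associated quadratic functional, specialized to the spline setting with the dual basis providing closed-form projection coefficients.
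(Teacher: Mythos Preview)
Your proposal is correct and follows essentially the same idea as the paper, which dispatches the result in a single sentence (``This follows by direct application of the dual functions''): both arguments reduce to observing that the coefficients $c_i = \pairing{f}{\dual{i}}$ solve the normal equations $\mat{\grammian}\vect{c} = [\pairing{f}{\bspline{j}}]_j$ via \eqref{eq:dualbasis}, hence $u$ is the $L^2$-projection and therefore the minimizer. Your write-up is considerably more explicit than the paper's one-liner (and contains a brief false start before settling on the clean route through the Gram system), but the underlying mechanism is identical.
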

\begin{proof} This follows by direct application of the dual functions.
\end{proof}

\subsection{$L^2$ approximate dual basis}
We loosen the restriction of duality and search for a set of functions $\List{\approxdual{i}, \; i=1, \ldots , \ndofs}$ with the following properties:
\begin{subequations}
\begin{enumerate}
	\item A basis for the spline space $\sspace{\p}{\r}$.
	\item Approximate $L^2$ duality with B-splines.
	\item Reproduction of all polynomials of degree $< \p+1$.
	\item Local compact support.
\end{enumerate}
\end{subequations}

The construction relies on symmetric positive definite (SPD) matrices. With $\mathbb{SPD}^{\ndofs} \subset \mathbb{R}^{\ndofs \times \ndofs}$ we denote the set of all such matrices of $\ndofs$ rows and columns. We recall that SPD matrices are invertible and their inverse is also SPD.

\begin{definition}[Approximate dual basis] Let $\pspace{\p}(\domain) \subset \sspace{\p}{\r}(\domain)$ denote the set of polynomials of degree $< p+1$ and let $\mat{S} \in \mathbb{SPD}^{\ndofs}$. We call the set of functions
\begin{align}
	\approxdual{i}(\param) = \sum_{j=1}^{\ndofs} S_{ij} \, \bspline{j}(\param), \quad i = 1, 2, \ldots , \ndofs,
	\label{eq:approxdualbasis}
\end{align}
an approximate dual basis to the set of B-splines if
\begin{align}
	\pairing{f}{\approxdual{i}} = \pairing{f}{\dual{i}}  \quad \forall f \in \pspace{\p}(\domain).
	\label{eq:approxduality}
\end{align}
\end{definition}
The identity \eqref{eq:approxduality} implies approximate duality with B-splines. Therefore, the functions $\approxdual{i}, \; (i=1, \ldots , \ndofs)$ will be referred to as \emph{approximate dual functions}. Their collection forms a basis for $\sspace{\p}{\r}$  (since $\mat{S}$ is invertible) and is called an \emph{approximate dual basis}. We note that the representation is not unique. A \Bl particularly \B useful approximate dual basis has been developed in \cite{chui2004nonstationary}. The approximate dual functions presented therein have minimum compact support, in the sense that there exists no other basis that satisfies the approximate duality with smaller supports. In the remainder of this paper we shall base our construction on that approach.

SPD matrices induce a discrete inner product on finite dimensional spaces. 
\begin{definition}[Discrete inner product on splines] Let $u, v \in \sspace{\p}{\r}(\domain)$ be expanded in terms of B-splines and coefficients as $u(\param) = \sum_{i=1}^{\ndofs} u_i \, \bspline{i}(\param)$ and $v(\param) = \sum_{i=1}^{\ndofs} v_i \, \bspline{i}(\param)$, respectively. An SPD-matrix $\mat{\hat{\grammian}} \in \SPD{\ndofs}$ induces a discrete inner product $\pairing{u}{v}_{\hat{\grammian}} \; : \; \sspace{\p}{\r} \times \sspace{\p}{\r} \mapsto \mathbb{R}$ defined by
\begin{align}
	\pairing{u}{v}_{\hat{\grammian}} := \sum_{i=1}^{\ndofs} \sum_{j=1}^{\ndofs} u_i \, \hat{\grammian}_{ij} \, v_j.
\end{align}
\end{definition}
We note that $\pairing{u}{v}_{\grammian}$ (where $\mat{\grammian}$ is the Grammian matrix) coincides with the usual $L^2$ inner product on splines, that is, $\pairing{u}{v}_{\grammian} = \pairing{u}{v}$ for spline functions $u$ and $v$. Similar to \eqref{eq:dualbasis} the approximate dual functions in \eqref{eq:approxdualbasis} are expressed as linear combinations of B-splines via a matrix vector product involving an SPD matrix. Hence, analogous to Lemma \eqref{eq:projection}, the associated quasi-interpolant may be associated to a minimization problem.

\begin{theorem}[Quasi $L^2$-projection \label{th:l2quasiprojection}] Let $f \in L^2(\domain)$ and $\mat{\hat{\grammian}} = \mat{S}^{-1}$. The spline function $u(\param) = \sum_{i=1}^{\ndofs} \pairing{f}{\approxdual{i}} \, \bspline{i}(\param)$ is the unique solution to the convex minimization problem
\begin{align}
	\underset{u \in \sspace{\p}{\r}(\domain)}{\text{argmin}} \; \tfrac{1}{2}\pairing{u}{u}_{\hat{\grammian}} - \pairing{u}{f}.
	\label{eq:minimizer1}
\end{align}
Furthermore, $u = f$ whenever $f \in \mathbb{P}^p$.
\end{theorem}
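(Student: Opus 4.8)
The plan is to turn the infinite-dimensional variational problem \eqref{eq:minimizer1} into a finite-dimensional strictly convex quadratic program over B-spline coefficients, whose minimizer is available in closed form. First I would expand a generic candidate $u \in \sspace{\p}{\r}(\domain)$ in B-splines, $u = \sum_{j=1}^{\ndofs} u_j \, \bspline{j}$, and collect the coefficients in $\vect{u} \in \mathbb{R}^{\ndofs}$; since $\List{\bspline{j}}$ is a basis, the map $\vect{u} \mapsto u$ is a linear bijection, so minimizing over $\sspace{\p}{\r}(\domain)$ is the same as minimizing over $\mathbb{R}^{\ndofs}$. Using bilinearity of the $L^2$ pairing together with the definition of the discrete inner product, the objective reduces to $J(\vect{u}) = \tfrac12\,\vect{u}^{\mathsf{T}}\,\mat{\hat{\grammian}}\,\vect{u} - \vect{u}^{\mathsf{T}}\,\vect{c}$ with $c_j := \pairing{f}{\bspline{j}}$. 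Because $\mat{\hat{\grammian}} = \mat{S}^{-1}$ is SPD, $J$ is a strictly convex quadratic, hence it has a unique global minimizer, characterized by $\nabla J = 0$, i.e.\ the normal equations $\mat{\hat{\grammian}}\,\vect{u} = \vect{c}$, equivalently $\vect{u} = \mat{S}\,\vect{c}$. This already settles existence and uniqueness.

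The next step is to identify this coefficient vector with the one in the statement. Componentwise, $u_i = (\mat{S}\,\vect{c})_i = \sum_{j} S_{ij}\,\pairing{f}{\bspline{j}} = \pairing{f}{\sum_{j} S_{ij}\,\bspline{j}} = \pairing{f}{\approxdual{i}}$, where the last equality is exactly the definition \eqref{eq:approxdualbasis} of the approximate dual functions. Therefore the minimizer is $u(\param) = \sum_{i=1}^{\ndofs}\pairing{f}{\approxdual{i}}\,\bspline{i}(\param)$, which is the first assertion.

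For the polynomial-exactness claim, suppose $f \in \pspace{\p}(\domain)$. Then the defining relation \eqref{eq:approxduality} (which applies verbatim, since $f$ occupies its first slot) lets me replace each coefficient $\pairing{f}{\approxdual{i}}$ by $\pairing{f}{\dual{i}}$, so the minimizer equals $\sum_{i}\pairing{f}{\dual{i}}\,\bspline{i}$, which by Proposition~\ref{eq:projection} is the exact $L^2$-projection of $f$ onto $\sspace{\p}{\r}(\domain)$. Since $\pspace{\p}(\domain) \subset \sspace{\p}{\r}(\domain)$, the best $L^2$ spline approximation of $f$ is $f$ itself, whence $u = f$.

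I do not anticipate a genuine difficulty here; the argument is essentially linear algebra together with one invocation of each of the two preceding results. The only points needing a little care are: making explicit that \emph{uniqueness} of the minimizer (not merely existence) rests on positive definiteness of $\mat{\hat{\grammian}}$ — mere semidefiniteness would leave an affine solution set; keeping the two (symmetric) slots of the $L^2$ inner product aligned so that the minimizing coefficients come out as $\pairing{f}{\approxdual{i}}$ rather than something transposed; and observing that the reduction of $J$ to a quadratic form uses symmetry of $\mat{\hat{\grammian}}$ in computing $\nabla J$. If one preferred to bypass Proposition~\ref{eq:projection}, the polynomial part could instead be obtained from property~3 of the approximate dual basis (exact reproduction of $\pspace{\p}$) combined with uniqueness of the B-spline coefficients of a spline.
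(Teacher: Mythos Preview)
Your proposal is correct and follows essentially the same approach as the paper's proof: expand in B-splines, reduce to the normal equations $\mat{\hat{\grammian}}\,\vect{u} = \vect{c}$, invert to obtain $u_i = \pairing{f}{\approxdual{i}}$, and for polynomial $f$ invoke \eqref{eq:approxduality} together with the $L^2$-projection property of the exact duals. You are slightly more explicit than the paper in justifying uniqueness via strict convexity from positive definiteness of $\mat{\hat{\grammian}}$, but the argument is otherwise identical.
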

\begin{proof} Let $u(\param) =  \sum_{i=1}^{\ndofs} u_i \, \bspline{i}(\param)$. The optimization problem is equivalent to the following set of linear equations
\begin{align}
	\sum_{j=1}^{\ndofs}  \hat{\grammian}_{ij} \, u_j = \pairing{f}{\bspline{i}}, \quad i = 1, \ldots , \ndofs.
\end{align}
Its solution is $u_i =  \sum_{j=1}^{\ndofs} S_{ij} \, \pairing{f}{\bspline{j}} =  \pairing{f}{\approxdual{i}}$. If $f \in \mathbb{P}^p$, then $\pairing{f}{\approxdual{i}} = \pairing{f}{\dual{i}}$. Here, $u_i = \pairing{f}{\dual{i}}$ is the best approximation in the $L^2$-norm. It reproduces all splines in the space, which includes all polynomials $f \in \mathbb{P}^p$.
\end{proof}

\subsection{Treatment of Dirichlet boundary conditions}
By associating the quasiinterpolant with the solution to a certain convex minimization problem it becomes straightforward to consider linear constraints, such as Dirichlet boundary conditions. Without loss of generality we focus on the application of a single homogeneous boundary condition on the left boundary of the univariate domain. 

\begin{theorem}[Quasi $L^2$-projection with a homogeneous boundary condition] Suppose $f \in H^1(\domain)$ satisfies $f(a) = 0$. The spline function $u(\param) = \sum_{i=1}^{\ndofs} \pairing{f}{\approxdual{i}} \, \bspline{i}(\param)$, where
\begin{align}
\label{eq:dual2}
	\begin{bmatrix}
		\approxdual{2}(\param) \\
		\vdots \\
		\approxdual{\ndofs}(\param)
	\end{bmatrix}
	 = 
	\begin{bmatrix} 
		\hat{\grammian}_{22} 		& \cdots & \hat{\grammian}_{2\ndofs} 					\\
		\vdots					& \ddots & \vdots 							\\
		\hat{\grammian}_{\ndofs2} 	& \cdots & \hat{\grammian}_{\ndofs \ndofs}
	\end{bmatrix}^{-1}
	\begin{bmatrix}
		\bspline{2}(\param) \\
		\vdots \\
		\bspline{\ndofs}(\param)
	\end{bmatrix},
\end{align}
is the unique solution to the convex minimization problem
\begin{align}
	\underset{u \in \sspace{\p}{\r}}{\text{argmin}} \; \tfrac{1}{2} \pairing{u}{u}_{\hat{\grammian}} - \pairing{u}{f} \quad \text{subject to } u(a) = 0.
	\label{eq:minimizer2}
\end{align}
Furthermore, if $f \in \mathbb{P}^p$, then $u = f$.
\end{theorem}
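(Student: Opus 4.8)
The plan is to imitate the proof of Theorem~\ref{th:l2quasiprojection}, with the single pointwise constraint handled by exploiting the structure of the open knot vector. First I would note that, because the end knots carry multiplicity $\p+1$, one has $\bspline{1}(a)=1$ and $\bspline{i}(a)=0$ for $i\geq 2$; hence for $u=\sum_{i=1}^{\ndofs}u_i\,\bspline{i}$ the value $u(a)$ equals the first coefficient $u_1$, and the feasible set $\{u\in\sspace{\p}{\r}:u(a)=0\}$ is precisely $\Span{\bspline{2},\ldots,\bspline{\ndofs}}$. The hypothesis $f\in H^1(\domain)$, rather than merely $f\in L^2(\domain)$, enters only here, to give a meaning to the trace $f(a)$; in one dimension the embedding $H^1(\domain)\hookrightarrow C(\overline{\domain})$ suffices, and the inner products $\pairing{f}{\approxdual{i}}$ remain well defined.

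Next I would eliminate the constraint and write down the first-order conditions. Substituting $u_1=0$ into the strictly convex quadratic functional in \eqref{eq:minimizer2} and setting the partial derivatives with respect to $u_2,\ldots,u_{\ndofs}$ to zero yields the reduced normal equations $\sum_{j=2}^{\ndofs}\hat{\grammian}_{ij}\,u_j=\pairing{f}{\bspline{i}}$, $i=2,\ldots,\ndofs$. Their coefficient matrix is the trailing $(\ndofs-1)\times(\ndofs-1)$ principal submatrix of $\hat{\grammian}$, which is again SPD and therefore invertible; this gives existence and uniqueness of the minimizer, and, upon inverting that submatrix and comparing with \eqref{eq:dual2}, identifies the solution coefficients as $u_i=\pairing{f}{\approxdual{i}}$ (with $u_1=0$). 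This establishes the first assertion.

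For the reproduction statement I would first isolate the algebraic fact that $(\hat{\grammian}-\grammian)\,\vect{g}=0$ for every B-spline coefficient vector $\vect{g}$ of a polynomial $g\in\pspace{\p}(\domain)$, where $\grammian$ is the exact Grammian from \eqref{eq:dualbasis}. This is the matrix form of the approximate-duality identity \eqref{eq:approxduality}: expanding, $\pairing{g}{\approxdual{i}}=\sum_{j}S_{ij}\,\pairing{g}{\bspline{j}}=(\mat{S}\,\grammian\,\vect{g})_i$ since $\mat{S}=\hat{\grammian}^{-1}$, whereas $\pairing{g}{\dual{i}}=g_i$ by the exact duality \eqref{eq:duality} (equivalently by Proposition~\ref{eq:projection}); equating the two forces $\mat{S}\,\grammian\,\vect{g}=\vect{g}$, i.e.\ $(\hat{\grammian}-\grammian)\,\vect{g}=0$. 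Now take $f\in\pspace{\p}(\domain)$ with $f(a)=0$; then $f_1=0$, so $f$ is feasible and $(\hat{\grammian}-\grammian)\,\vect{f}=0$. The minimizer $u^{\star}$ is characterized by $\pairing{u^{\star}}{v}_{\hat{\grammian}}=\pairing{f}{v}$ for all $v\in\Span{\bspline{2},\ldots,\bspline{\ndofs}}$; since $\pairing{f}{v}=\pairing{f}{v}_{\grammian}$ on the spline space and $\pairing{f}{v}_{\hat{\grammian}}-\pairing{f}{v}_{\grammian}=\sum_{k}v_k\big[(\hat{\grammian}-\grammian)\,\vect{f}\big]_k=0$, the function $f$ itself satisfies these equations, and uniqueness gives $u^{\star}=f$, that is $u=f$.

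I expect the main obstacle to be precisely the reproduction part: checking that the deletion of the first row and column of $\hat{\grammian}$ in \eqref{eq:dual2} is exactly compatible with the coefficient subspace $\{f_1=0\}$ picked out by the boundary condition, and that the approximate-duality identity survives this restriction. Everything hinges on the clean identity $(\hat{\grammian}-\grammian)\,\vect{f}=0$ for polynomial coefficient vectors; once it is in place, the remaining steps are the same bookkeeping as in Theorem~\ref{th:l2quasiprojection}, now restricted to the active constraint set. A secondary point to verify is that passing between the $L^2$ inner product and the discrete inner product $\pairing{\cdot}{\cdot}_{\hat{\grammian}}$ produces no boundary terms, which is immediate because both coincide with $\pairing{\cdot}{\cdot}_{\grammian}$ on $\sspace{\p}{\r}$.
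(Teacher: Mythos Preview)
Your proposal is correct and follows essentially the same route as the paper: reduce the constrained problem to the $(\ndofs-1)\times(\ndofs-1)$ normal equations via $u_1=u(a)=0$, invert the SPD principal submatrix of $\mat{\hat{\grammian}}$ to identify the coefficients with the pairings $\pairing{f}{\approxdual{i}}$, and then appeal to polynomial reproduction of the unconstrained quasi-interpolant (Theorem~\ref{th:l2quasiprojection}). The only cosmetic difference is in the reproduction step: the paper argues that the unconstrained minimizer equals $f$ and happens to be feasible, hence is also the constrained minimizer, whereas you verify the reduced normal equations directly via the matrix identity $(\mat{\hat{\grammian}}-\mat{\grammian})\,\vect{f}=0$; these are the same fact in different packaging.
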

\begin{proof}
In matrix-notation we may write (making use of $u_1 = u(a) = 0$)
\begin{align}
	\underset{u_i \in \mathbb{R}}{\text{argmin}} \; \tfrac{1}{2}\sum_{i=2}^{\ndofs} \sum_{j=2}^{\ndofs} u_i \, \hat{\grammian}_{ij} \, u_j \; - \sum_{i=2}^{\ndofs} u_i \, \pairing{f}{\bspline{i}},
	\label{eq:minimizer3}
\end{align}
which is equivalent to the matrix problem
\begin{align*}
	\sum_{j=2}^{\ndofs} \hat{\grammian}_{ij} \, u_j  = \pairing{f}{\bspline{i}}, \quad i = 2, \ldots , \ndofs.
\end{align*}
Its solution can again be presented in terms of approximate dual functions: $u_i  = \pairing{f}{\approxdual{i}}$, where the linear combination is goverened by \eqref{eq:dual2}. 
 
The constraint does not alter the energy in \eqref{eq:minimizer2} (which is obvious from the expression in \eqref{eq:minimizer3}). Hence, a solution of \eqref{eq:minimizer2} is a solution to \eqref{eq:minimizer1} if one considers only those functions that are exactly reproduced and satisfy the constraint: polynomials $f \in \mathbb{P}^p$ that satisfy $f(a) = 0$. Hence, the quasiinterpolant preserves all polynomials that satisfy the boundary condition.
\end{proof}

We note that it is not necessary to compute the inverse of the submatrix of $\mat{\hat{\grammian}}$ directly. One can make efficient use of the Woodbury matrix identity \cite[Section 2.1.4]{golub2013matrix} to eliminate any unnecessary computations of inverse matrices
\begin{align}
	\left(\mat{\hat{\grammian}} + \mat{U} \mat{V}^T \right)^{-1} = \mat{S} - \mat{S} \mat{U}  \left(\mat{I}_2  + \mat{V}^T \mat{S} \mat{U}  \right)^{-1} \mat{V}^T \mat{S}.
\end{align}
In our case, the matrix $\mat{\hat{\grammian}} + \mat{U} \mat{V}^T$ is a rank two update of the original matrix
\begin{align*} 
	\underbrace{\begin{bmatrix} 
		\hat{\grammian}_{11}	& 0 						& \cdots & 0 									\\
		0 				& \hat{\grammian}_{22} 			& \cdots & \hat{\grammian}_{2\ndofs} 				\\
		\vdots			& \vdots					& \ddots & \vdots 							\\
		0				&  \hat{\grammian}_{\ndofs2} 	& \cdots & \hat{\grammian}_{\ndofs \ndofs}
	\end{bmatrix}}_{\mat{\hat{\grammian}} + \mat{U} \mat{V}^T}
	=
	\underbrace{\begin{bmatrix} 
		\hat{\grammian}_{11}			& \hat{\grammian}_{12} 			& \cdots & \hat{\grammian}_{1\ndofs} 		\\
		\hat{\grammian}_{21} 		& \hat{\grammian}_{22} 			& \cdots & \hat{\grammian}_{2\ndofs} 		\\
		\vdots					& \vdots					& \ddots & \vdots 					\\
		\hat{\grammian}_{\ndofs1}	& \hat{\grammian}_{\ndofs2} 	& \cdots & \hat{\grammian}_{\ndofs \ndofs}
	\end{bmatrix}}_{\mat{\hat{\grammian}}}
	+
	\underbrace{\begin{bmatrix} 
		0						& -1 			\\
		\hat{\grammian}_{21} 		& 0 			\\
		\vdots					& \vdots		\\
		\hat{\grammian}_{\ndofs1}	& 0
	\end{bmatrix}}_{\mat{U}}
	\underbrace{\begin{bmatrix} 
		-1		& 0 				& \cdots & 0 		\\
		0 		& \hat{\grammian}_{21} 	& \cdots & \hat{\grammian}_{\ndofs1}
	\end{bmatrix}}_{\mat{V}^T}
\end{align*}

\subsection{Connection with rowsum mass lumping}
We briefly discuss the connection with rowsum mass lumping. As in \cite{nguyen2023} one can use approximate dual functions directly as test functions in a Petrov-Galerkin method. The resulting mass matrix is
\begin{align}
	C_{ij} := \pairing{\approxdual{i}}{\bspline{j}} = S_{ik} \, \grammian_{kj}.
\end{align}
The equation above concerns the one-dimensional case with unit density. However, many properties carry over to the multidimensional setting using \Bl Kronecker products and \B the techniques presented in the next section (see also  \cite{nguyen2023}).

\begin{lemma} $\sum_{j=1}^{\ndofs} \pairing{\approxdual{i}}{\bspline{j}} = 1, \quad i=1,\ldots , \ndofs.$
\label{lemma:1}
\end{lemma}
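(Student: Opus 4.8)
The plan is to show that $\sum_{j=1}^{\ndofs}\pairing{\approxdual{i}}{\bspline{j}} = 1$ by recognizing that the sum over $j$ of the B-splines is the constant function $1$, which is a polynomial of degree $0 < \p+1$, and then invoking the approximate duality property \eqref{eq:approxduality} together with the fact that the exact $L^2$ dual functions reproduce constants.

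First I would use the partition of unity property of B-splines: $\sum_{j=1}^{\ndofs} \bspline{j}(\param) = 1$ for all $\param \in \domain$. By linearity of the $L^2$ inner product in its second argument, this gives
\begin{align*}
	\sum_{j=1}^{\ndofs} \pairing{\approxdual{i}}{\bspline{j}} = \pairing{\approxdual{i}}{\textstyle\sum_{j=1}^{\ndofs}\bspline{j}} = \pairing{\approxdual{i}}{1} = \pairing{1}{\approxdual{i}}.
\end{align*}
Since the constant function $1$ belongs to $\pspace{\p}(\domain)$, the defining identity \eqref{eq:approxduality} of the approximate dual basis applies with $f = 1$, yielding $\pairing{1}{\approxdual{i}} = \pairing{1}{\dual{i}}$.

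It then remains to evaluate $\pairing{1}{\dual{i}}$ for the exact $L^2$ dual basis. By Proposition \ref{eq:projection}, the $L^2$-projection of any $f \in L^2(\domain)$ onto $\sspace{\p}{\r}(\domain)$ is $\sum_{i=1}^{\ndofs} \pairing{f}{\dual{i}}\,\bspline{i}(\param)$; taking $f = 1$, which already lies in the spline space, the projection reproduces it exactly, so $\sum_{i=1}^{\ndofs}\pairing{1}{\dual{i}}\,\bspline{i}(\param) = 1 = \sum_{i=1}^{\ndofs}\bspline{i}(\param)$. By linear independence of the B-splines we conclude $\pairing{1}{\dual{i}} = 1$ for each $i$, which completes the argument. (Alternatively, one can substitute the expansion \eqref{eq:dualbasis}, $\dual{i} = \sum_k (\grammian^{-1})_{ik}\bspline{k}$, to get $\pairing{1}{\dual{i}} = \sum_k (\grammian^{-1})_{ik}\pairing{1}{\bspline{k}} = \sum_{k,l}(\grammian^{-1})_{ik}\grammian_{kl}\cdot 1 = \sum_l \delta_{il} = 1$, using partition of unity to write $1 = \sum_l \bspline{l}$ inside the first inner product.)

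There is no real obstacle here — the statement is essentially the combination of three facts already available: partition of unity of B-splines, the polynomial-reproduction property \eqref{eq:approxduality} built into the definition of the approximate dual basis (constants being the lowest-degree case), and the exactness of $L^2$-projection on functions already in the spline space. The only point requiring minor care is making sure the constant function $1$ is indeed admissible in \eqref{eq:approxduality}, i.e. that $\pspace{\p}(\domain)$ contains constants, which it does since $0 \le \p$. The result is the key ingredient showing that rowsum mass lumping of the Petrov-Galerkin mass matrix $C_{ij}$ reproduces the correct total mass, linking the two interpretations of the method.
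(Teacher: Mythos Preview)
Your proof is correct and follows essentially the same approach as the paper: partition of unity collapses the sum to $\pairing{\approxdual{i}}{1}$, reproduction of constants gives $\sum_i \pairing{\approxdual{i}}{1}\,\bspline{i}=1$, and linear independence together with partition of unity forces each coefficient to equal $1$. The only minor difference is that the paper invokes polynomial reproduction of the approximate dual quasi-interpolant directly (Theorem~\ref{th:l2quasiprojection}), whereas you take the equivalent detour through the defining identity \eqref{eq:approxduality} and the exact dual basis via Proposition~\ref{eq:projection}.
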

\begin{proof} Partition of unity implies that $\sum_{j=1}^{\ndofs} \pairing{\approxdual{i}}{\bspline{j}} = \pairing{\approxdual{i}}{1}$. Next, consider reproduction of constants, $\sum_i \pairing{\approxdual{i}}{1} \, \bspline{i}(\param) = 1$. Linear independence of B-splines and partition of unity implies the result.
\end{proof}

\begin{corollary} $C^{\mathrm{lumped}}_{ij} = \delta_{ij}, \quad i=1,\ldots , \ndofs$.
\end{corollary}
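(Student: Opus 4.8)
The plan is to read off the claim directly from the definition of rowsum lumping together with Lemma~\ref{lemma:1}. Recall that rowsum mass lumping replaces a matrix $\mmat{C}=[C_{ij}]$ by the diagonal matrix whose $i$-th diagonal entry is $C^{\mathrm{lumped}}_{ii}:=\sum_{j} C_{ij}$ and whose off-diagonal entries vanish. Hence there is nothing to prove for $i\neq j$: by construction $C^{\mathrm{lumped}}_{ij}=0$, which matches $\delta_{ij}=0$.

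For the diagonal entries, I would simply substitute $C_{ij}=\pairing{\approxdual{i}}{\bspline{j}}$ and invoke Lemma~\ref{lemma:1}, which states $\sum_{j=1}^{\ndofs}\pairing{\approxdual{i}}{\bspline{j}}=1$ for every $i$. Therefore $C^{\mathrm{lumped}}_{ii}=1=\delta_{ii}$. Combining the two cases gives $C^{\mathrm{lumped}}_{ij}=\delta_{ij}$ for all $i,j\in\{1,\ldots,\ndofs\}$, i.e.\ the rowsum-lumped Petrov--Galerkin mass matrix is the identity.

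The only mildly substantive ingredient, the row-sum identity, has already been established in Lemma~\ref{lemma:1} from the partition-of-unity property of the B-splines and the reproduction of constants by the approximate dual basis (a special case of \eqref{eq:approxduality} with $f\equiv 1$). Consequently I do not anticipate any real obstacle here: the statement is a one-line consequence of the preceding lemma and the definition of lumping, and the proof amounts to citing Lemma~\ref{lemma:1} and noting that off-diagonal rowsum-lumped entries are zero by definition.
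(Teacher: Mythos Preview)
Your proposal is correct and follows exactly the route the paper intends: the corollary is stated without proof immediately after Lemma~\ref{lemma:1}, and the argument is precisely that the diagonal rowsum entries equal $1$ by the lemma while the off-diagonal entries vanish by definition of rowsum lumping.
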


Consider $\mat{\grammian} \, \vect{u} = \vect{f}$. It follows that $\mat{C} \, \vect{u} = \mat{S}\, \mat{\grammian} \, \vect{u} = \mat{S}\vect{f}$. Rowsum lumping matrix $\mat{C}$ implies that $\vect{u} = \mat{S}\vect{f}$, which is equivalent to $\mat{\hat{\grammian}} \, \vect{u} = \vect{f}$. So, the perspective of the Petrov-Galerkin method in combination with rowsum mass lumping is equivalent to our viewpoint that involves a standard Galerkin method with a customized mass matrix $\mat{\hat{\grammian}}$ that happens to have an explicit sparse inverse denoted by $\mat{S}$. \Bl A similar viewpoint was presented recently in \cite{held2023efficient}. \B

\section{Higher order accurate mass lumping \label{sec:multivariate}}
The approximate dual basis introduced in the previous section maintains its properties, in particular polynomial reproduction, under affine mappings only. Here we extend the theoretical results to the two-dimensional setting and incorporate non-linear geometric mappings as well.

\subsection{Extension to arbitrary isoparametric mappings}
Let $\Phi \; : \; \ParamDomain \rightarrow \Domain$ denote a mapping from the unitsquare to the physical domain and let $F(\x) = \nabla \Phi (\x)$ denote the Jacobian matrix at a point $\x \in \ParamDomain$. As usual, $\phi$ is an invertible mapping such that its Jacobian determinant is a positive function $\det{(F)}>0$. We let $c \circ \Phi = \det{(F)} \cdot \rho \circ \Phi$, which is a positive function by virtue of $\det{(F)}>0$ and $\rho >0$.

We proceed as in \cite{nguyen2023} and define trial and test functions as follows
\begin{subequations}
\begin{align}
	\trialfun{i} \circ \Phi &:= \bspline{i_1}(\param_1) \cdot \bspline{i_2}(\param_2), \quad \idx{i} \in \eta,  && (\text{trial functions})\\ 
	\dualtestfun{i} \circ \Phi &:= \frac{\approxdual{i_1}(\param_1) \cdot \approxdual{i_2}(\param_2)}{c \circ \Phi}, \quad \idx{i} \in \eta. && (\text{test functions}) \label{eq:test}
\end{align}
\end{subequations}
The entries of the consistent mass and stiffness \Bl matrices, \B the right hand side vector, and relevant boundary contributions become
\begin{subequations}
\begin{align}
	\mmat{M}_{\idx{ij}} &= \bilinearform{b}{\dualtestfun{i}}{\trialfun{j}} =  \sum_{\idx{k}} \mmat{S}_{\idx{i} \idx{k}} \; \bilinearform{b}{\testfun{k} / c}{\trialfun{j}}, \\
	\mmat{K}_{\idx{ij}} 	&= \bilinearform{a}{\dualtestfun{i}}{\trialfun{j}} = \sum_{\idx{k}}  \mmat{S}_{\idx{i} \idx{k}} \; \bilinearform{a}{\testfun{k} / c}{\trialfun{j}}, \\
	\mat{F}_{\idx{i}}		&= l(\dualtestfun{i}) - \bilinearform{a}{\dualtestfun{i}}{g^h} - \bilinearform{b}{\dualtestfun{i}}{\ddot{g}^h} = \sum_{\idx{k}}  \mmat{S}_{\idx{i} \idx{k}} \; \left( l(\testfun{k} / c) - \bilinearform{a}{\testfun{k} / c}{g^h} - \bilinearform{b}{\testfun{k} / c}{\ddot{g}^h} \right),	\\
	\tilde{d}_{0\idx{i}} &:= \bilinearform{b}{\dualtestfun{i}}{u_0 - g^h(0)} =  \sum_{\idx{k}}  \mmat{S}_{\idx{i} \idx{k}} \; \bilinearform{b}{\testfun{k} / c}{u_0 - g^h(0)},	\\
	\dot{\tilde{d}}_{0\idx{i}} &:= \bilinearform{b}{\dualtestfun{i}}{\dot{u}_0 - \dot{g}^h(0)} = \sum_{\idx{k}}  \mmat{S}_{\idx{i} \idx{k}} \; \bilinearform{b}{\testfun{k} / c}{\dot{u}_0 - \dot{g}^h(0)},
\end{align}
\end{subequations}
where $\mmat{S} = \mat{S}_2 \otimes \mat{S}_1$ is a Kronecker product matrix. Here, the subscripts refer to the component direction. Notably, the consistent mass matrix has a simple structure given by
\begin{align*}
	\mmat{M}_{\idx{ij}} &= \int_{0}^1 \approxdual{i_2}(\x_2) \bspline{j_2}(\param_2) \, d\param^2 \; \int_{0}^1 \approxdual{i_1}(\x_1) \bspline{j_1}(\param_1) d\param^1.
\end{align*}
Hence, the consistent mass matrix does not depend on the geometric mapping in any way since the function $c = \det{(F)} \cdot \rho$ in the denominator of the test functions \eqref{eq:test} cancels under the pullback to the reference domain. Alternatively, a
weighted inner product can be used to remove the dependency on the geometric mapping, see e.g. \cite{dornisch:2021}. 

It follows that all properties are maintained under geometric mappings. Moreover, the matrix can be factorized as follows:
\begin{align*}
	\mmat{M} &= \mat{S}_2 \mat{\grammian}_2 \otimes \mat{S}_1 \mat{\grammian}_1.
\end{align*}

\subsection{Higher order accuracy under mass lumping}
Prior to imposition of Dirichlet boundary conditions the semi-discrete equations associated with the explicit Petrov-Galerkin method are
\begin{align*}
	\mmat{M} \, \ddot{\vect{d}}_{n+1} = \mat{F}_{n+1} - \mmat{K} \, \vect{d}_n.
\end{align*}
Applying rowsum mass lumping ($\mmat{M}^{\text{lumped}} = \mmat{Id}$) the equations simplify to
\begin{align*}
	\ddot{\vect{d}}_{n+1} = \mat{F}_{n+1} - \mmat{K} \, \vect{d}_n.
\end{align*}
In the following we let
\begin{align}
	\mmat{\tilde{K}}_{\idx{ij}} 	&= \bilinearform{a}{\testfun{i} / c}{\trialfun{j}}, \\
	\mat{\tilde{F}}_{\idx{i}}		&= l(\testfun{i} / c) - \bilinearform{a}{\testfun{i} / c}{g^h} - \bilinearform{b}{\testfun{i} / c}{\ddot{g}^h}.
\end{align}
Hence, $\mmat{K} = \mmat{S} \, \mmat{\tilde{K}}$ and $\mat{F} = \mmat{S} \, \mat{\tilde{F}}$, where $\mmat{S} = \mat{S}_2 \otimes \mat{S}_1$ is a Kronecker product matrix. Using $\mmat{\hat{M}} = \mmat{S}^{-1} = \mat{S}^{-1}_2 \otimes \mat{S}^{-1}_1 = \mat{\hat{\grammian}}_2 \otimes \mat{\hat{\grammian}}_1$ we have:
\begin{align*}
	\mmat{\hat{M}} \, \ddot{\vect{d}}_{n+1} = \mat{\tilde{F}}_{n+1} - \mmat{\tilde{K}} \, \vect{d}_n.
\end{align*}
This is a direct extension of the $L^2$ quasi-projection (Theorem \ref{th:l2quasiprojection}) to the two-dimensional setting using the Kronecker product structure. The approximation properties in one dimension directly carry over to the multi-dimensional setting via the Kronecker product. Hence,  the approach maintains polynomial accuracy under mass lumping.

\subsection{Imposition of Dirichlet boundary conditions}
Dirichlet conditions are applied to the equations after performing mass lumping. We assume that the Dirichlet data are imposed separately for each side of the rectangular B-spline patch. Consequently, the mass matrix under strongly imposed homogeneous Dirichlet boundary conditions maintains a factorization $\underline{\mmat{\hat{M}}} = \underline{\mat{\hat{\grammian}}}_{2} \otimes \underline{\mat{\hat{\grammian}}}_{1}$, where homogeneous Dirichlet conditions are imposed on each of the univariate matrices $\underline{\mat{\hat{\grammian}}}_{k}, \; k=1,2$. If $\underline{\mmat{\hat{M}}}$, $\underline{\mmat{\tilde{K}}}$ and $\underline{\mat{\tilde{F}}}$ denote the matrices and vectors obtained after strong imposition of Dirichlet data, then the final semi-discrete equations are
\begin{align}
	\underline{\ddot{\vect{d}}}_{n+1} = \underline{\mmat{S}_2} \otimes \underline{\mmat{S}_1} \left(\underline{\mat{\tilde{F}}}_{n+1} - \underline{\mmat{\tilde{K}}} \, \underline{\vect{d}}_n \right).
	\label{eq:explicit_update}
\end{align}	
Again, the approximation properties carry directly over from the one-dimensional setting to the two-dimensional setting via the Kronecker product. Hence, the approach maintains polynomial precision, even with strongly imposed Dirichlet data.

\subsection{Implementational aspects}
We emphasize the following points with regards to the implementation:
\begin{itemize}
\item The Kronecker structure of $ \underline{\mmat{S}_2} \otimes \underline{\mmat{S}_1}$ facilitates efficient matrix storage that scales as $\propto 2(2\p+1) \cdot \sqrt{N}$ instead of $\propto (2\p+1)^2 \cdot N$. The payoff increases with polynomial degree and spatial dimension. In addition, matrix vector multiplications can be performed very efficiently since the matrices $\underline{\mat{S}}_k$ have a banded structure of contiguous non-zero entries.
\item The stiffness matrix need not be explicitly computed. Instead, the matrix vector product $\underline{\mmat{\tilde{K}}} \, \underline{\vect{d}}_n$ can be evaluated in a matrix-free fashion.
\item The entries of the stiffness matrix $\mmat{\tilde{K}}_{\idx{ij}} = \bilinearform{a}{\testfun{i} / c}{\trialfun{j}}$ are more complicated to evaluate than the entries to the standard Galerkin stiffness matrix due to the factor $1/c$ in the test function slot, effectively doubling the cost of evaluation for a second order problem and quadrupling it for a fourth order problem. 
\end{itemize}

\section{Numerical results \label{sec:results}}
In this section we verify accuracy and robustness of the proposed methodology. First, we analyze the discrete frequency spectrum obtained for a one-dimensional vibrating string that is clamped at either end. Here, we are particularly interested in the accuracy of the discrete frequencies obtained with the higher order approximated \Bl mass matrix \B as compared to the consistent and lumped \Bl mass matrices \B, specifically in the case of Dirichlet boundary conditions. Subsequently, we investigate the dynamics of a two-dimensional linear membrane. The benchmark problem verifies that higher order accuracy is maintained in the case of a nonlinear geometric mapping. We have combined our techniques with explicit Runge-Kutta time-stepping schemes of the appropriate order to maintain higher order accuracy in both space and time. Furthermore, the outlier removal procedures \Bl presented \B in \cite{Hiemstra_outlier_2021} are used to improve critical timestep values.

\subsection{Spectrum analysis}
The eigenvalue decomposition is an important tool in the study of the dynamic response of a system: it allows one to uncouple the equations such that each degree of freedom is essentially governed by its own uncoupled equation of motion (see e.g. \cite{hughes2012finite}). Each discrete eigenvalue is a measure of the amount of energy associated with one such response. Here we investigate the behavior of the discrete frequency spectra (the frequency is the square root of an eigenvalue) as a function of the normalized mode number for a one-dimensional second order problem\footnote{We refer to \cite{nguyen2023} for frequency spectra results associated with fourth order differential operators.}. The case studied here may be associated with the dynamics of a vibrating string that is fixed at either end. We investigate the spectra obtained for three different mass matrices: (1) consistent mass; (2) rowsum lumped mass; and (3) higher order approximate mass. Appropriate boundary constraints have been prescribed as explained in Section \ref{sec:approxduals} and outlier removal procedures have been applied as described in \cite{Hiemstra_outlier_2021}.

Figure \ref{fig:spectra1} displays the frequencies normalized by the corresponding analytical values. Hence, results closer to unity are better in terms of accuracy. It may be observed that a larger part of the spectrum is well approximated using the higher order approximate mass than with rowsum lumped mass. This is the case for all studied polynomial degrees, $p=2 - 5$. 
\begin{figure}[h]
	\centering
	\subfloat[$p=2$]{\includegraphics[trim = 0cm 0cm 0cm 0cm, clip,width=0.48\textwidth]{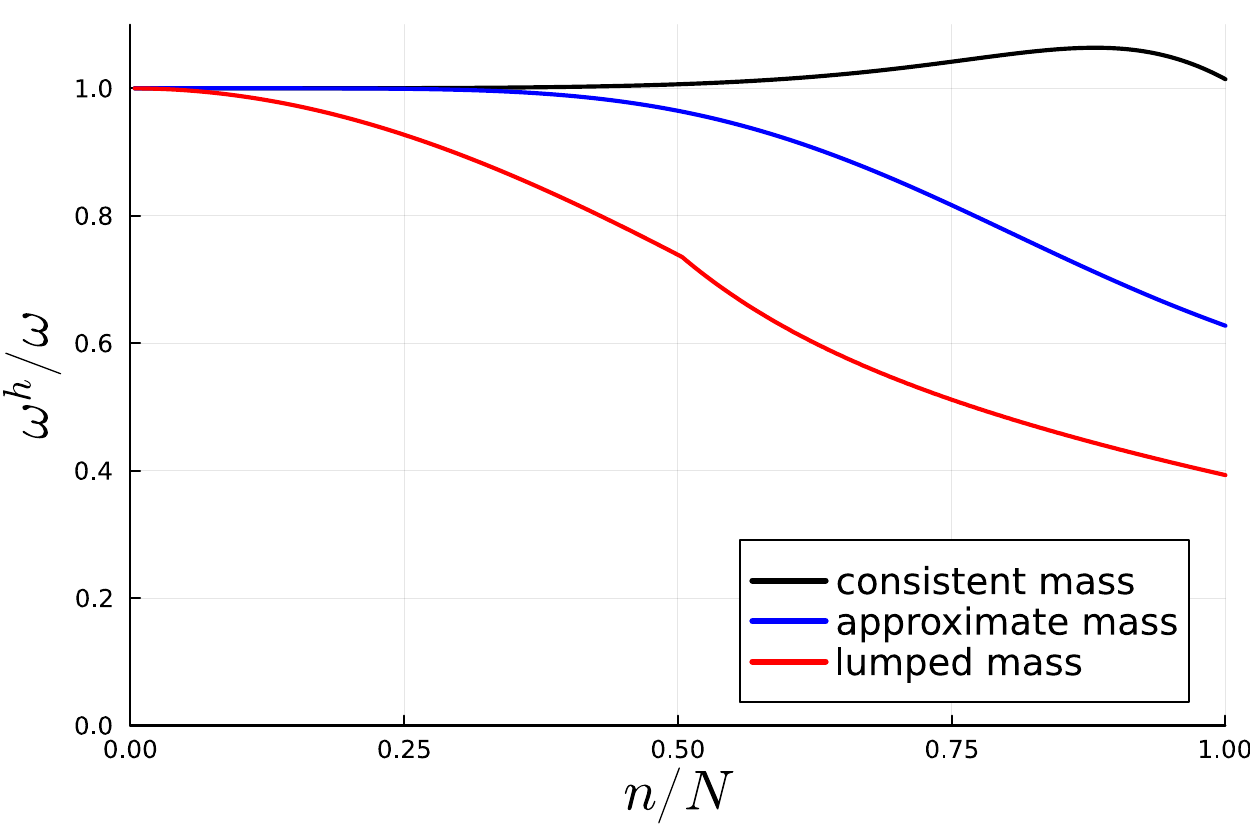}}
	\subfloat[$p=3$]{\includegraphics[trim = 0cm 0cm 0cm 0cm, clip,width=0.48\textwidth]{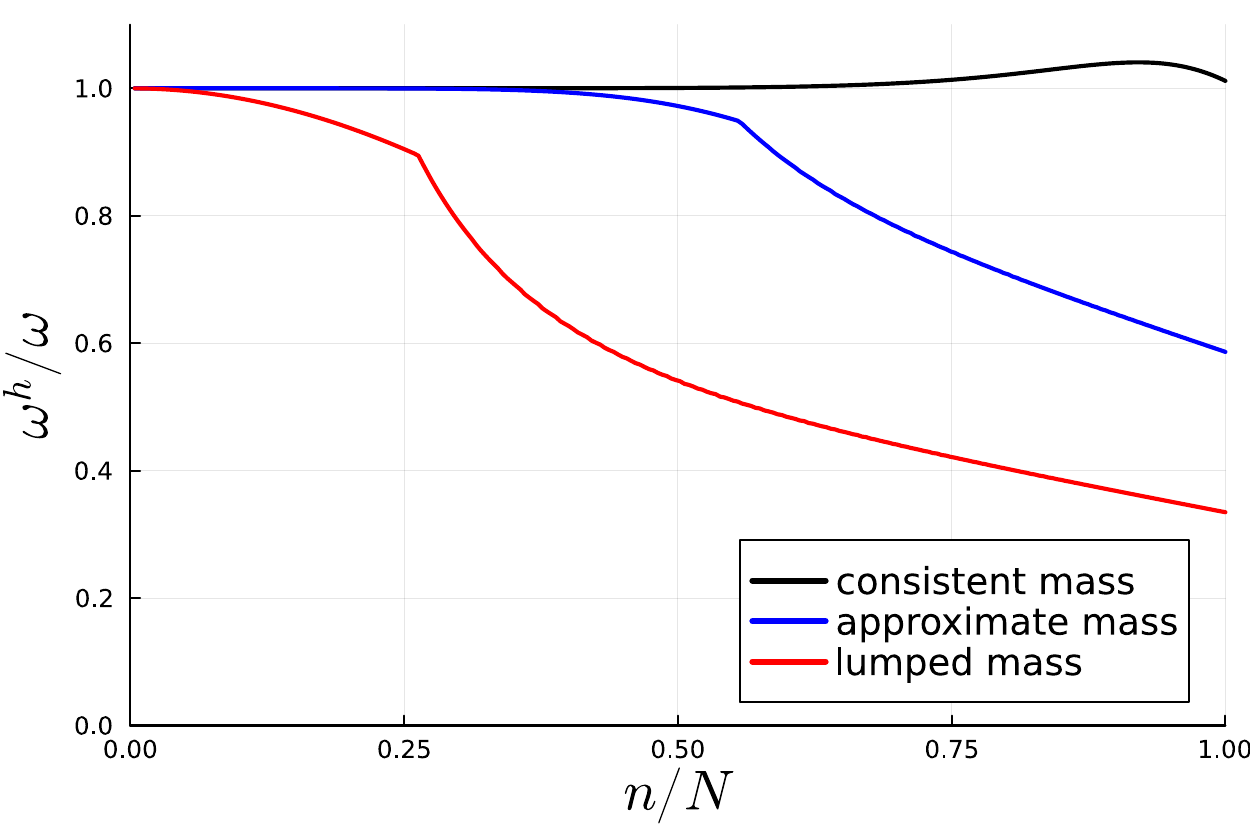}} 	\\
	\subfloat[$p=4$]{\includegraphics[trim = 0cm 0cm 0cm 0cm, clip,width=0.48\textwidth]{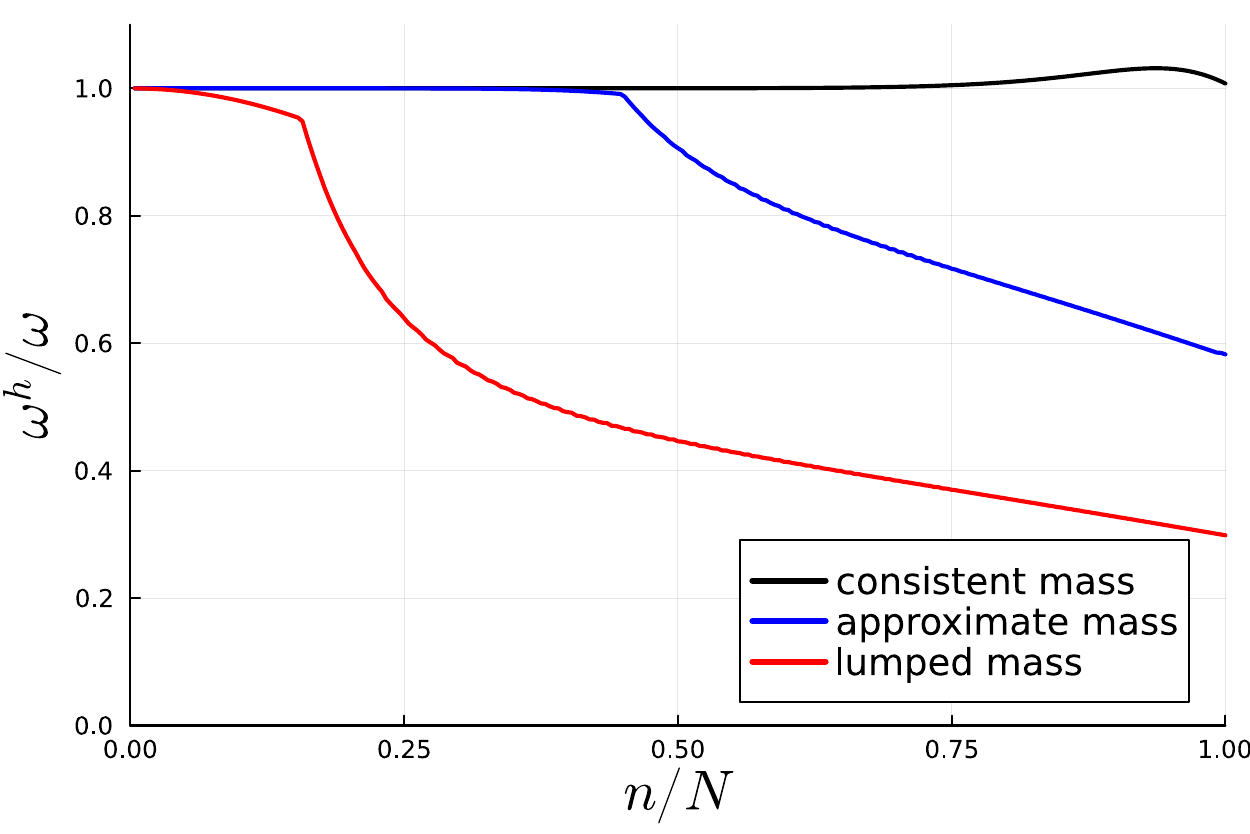}}
	\subfloat[$p=5$]{\includegraphics[trim = 0cm 0cm 0cm 0cm, clip,width=0.48\textwidth]{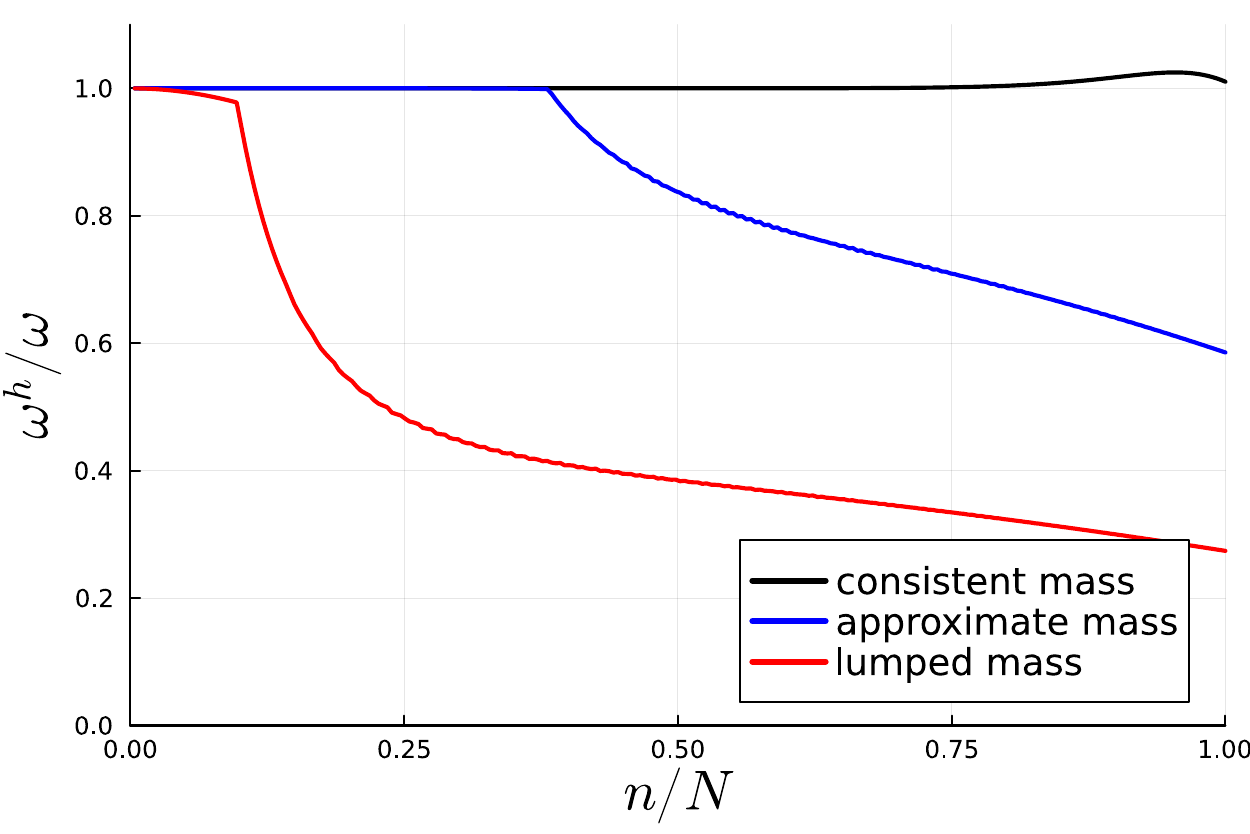}}	\\
	\caption{Normalized frequency spectra associated with consistent mass (black), higher order approximated mass (blue), and rowsum lumped mass (red). The results are obtained for polynomial degrees $p=2-5$ with $N=250$. The outlier frequencies have been removed using the technique \Bl presented \B in \cite{Hiemstra_outlier_2021}.}
\label{fig:spectra1}
\end{figure}

The gain in relative accuracy compared to \Bl the \B rowsum lumped mass becomes particularly apparent by investigating the relative errors accrued in the frequencies in a semilog plot, see Figure \ref{fig:spectra2}. The important frequencies obtained at low mode numbers using the higher order approximate mass are very close to those obtained using the consistent mass matrix. This is particularly the case for $p=2$ and $p=3$. For higher polynomial degrees ($p=4$ and $p=5$), we observe that the frequencies are much better approximated using the consistent mass matrix. Nevertheless, the gain in accuracy compared to lumped mass ranges from three to six orders in magnitude and the effect increases with polynomial degree. We note that high precisison arithmetic has been used in order to maintain and display results up to double precision arithmetic precision.
\begin{figure}[h]
	\centering
	\subfloat[$p=2$]{\includegraphics[trim = 0cm 0cm 0cm 0cm, clip,width=0.48\textwidth]{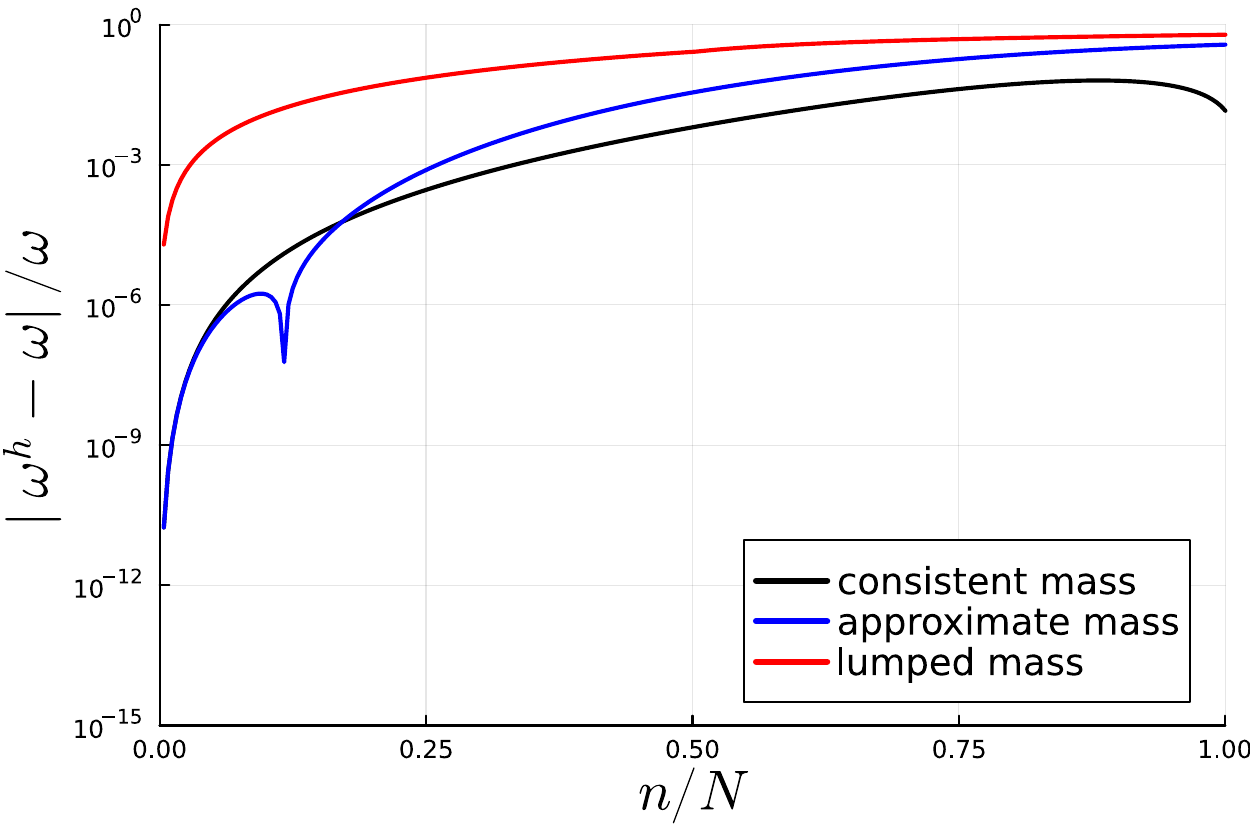}}
	\subfloat[$p=3$]{\includegraphics[trim = 0cm 0cm 0cm 0cm, clip,width=0.48\textwidth]{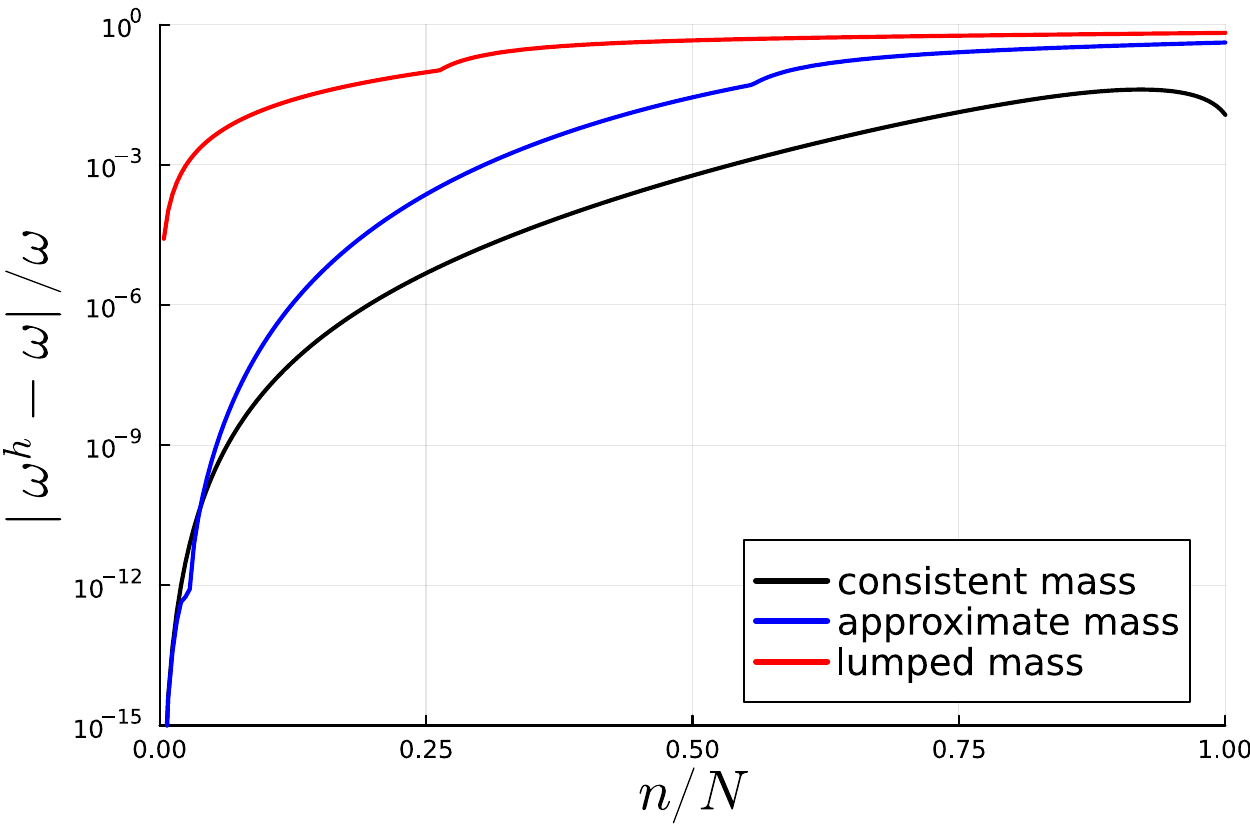}} 	\\
	\subfloat[$p=4$]{\includegraphics[trim = 0cm 0cm 0cm 0cm, clip,width=0.48\textwidth]{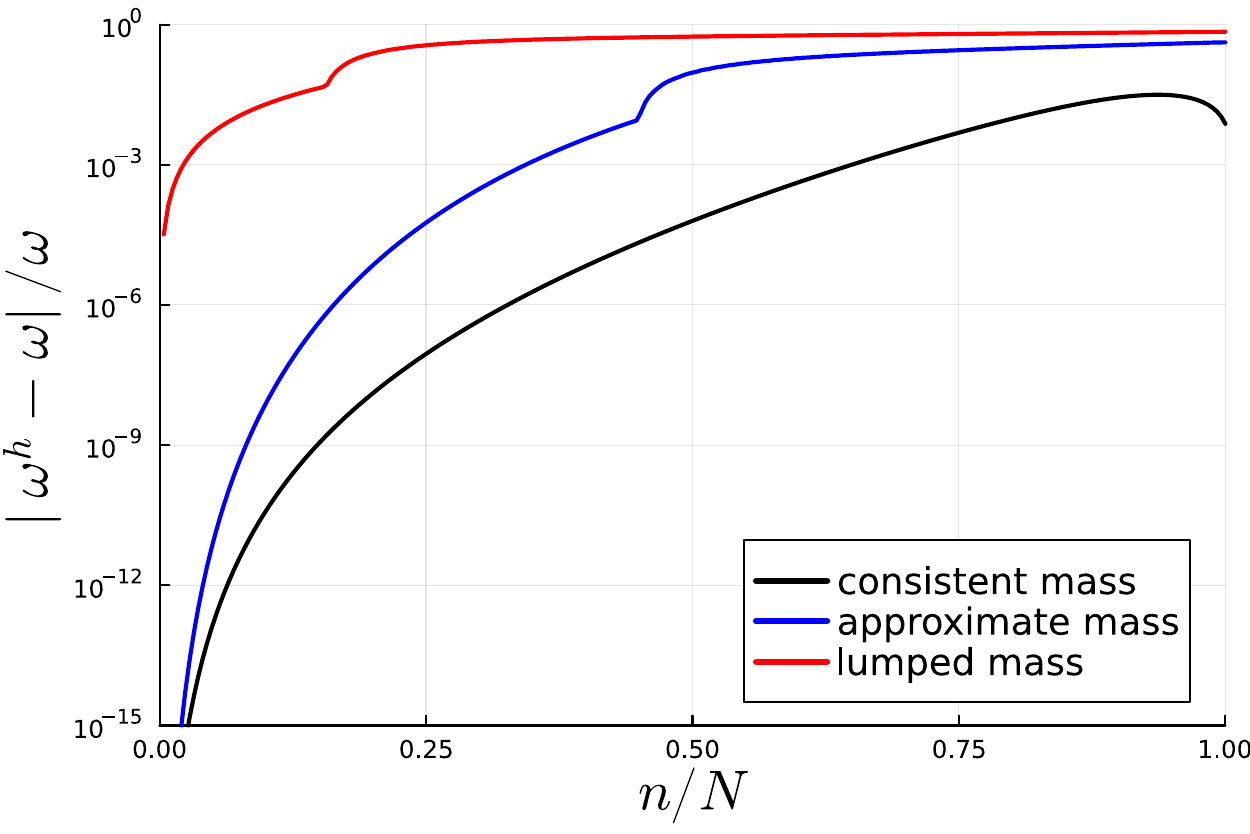}}
	\subfloat[$p=5$]{\includegraphics[trim = 0cm 0cm 0cm 0cm, clip,width=0.48\textwidth]{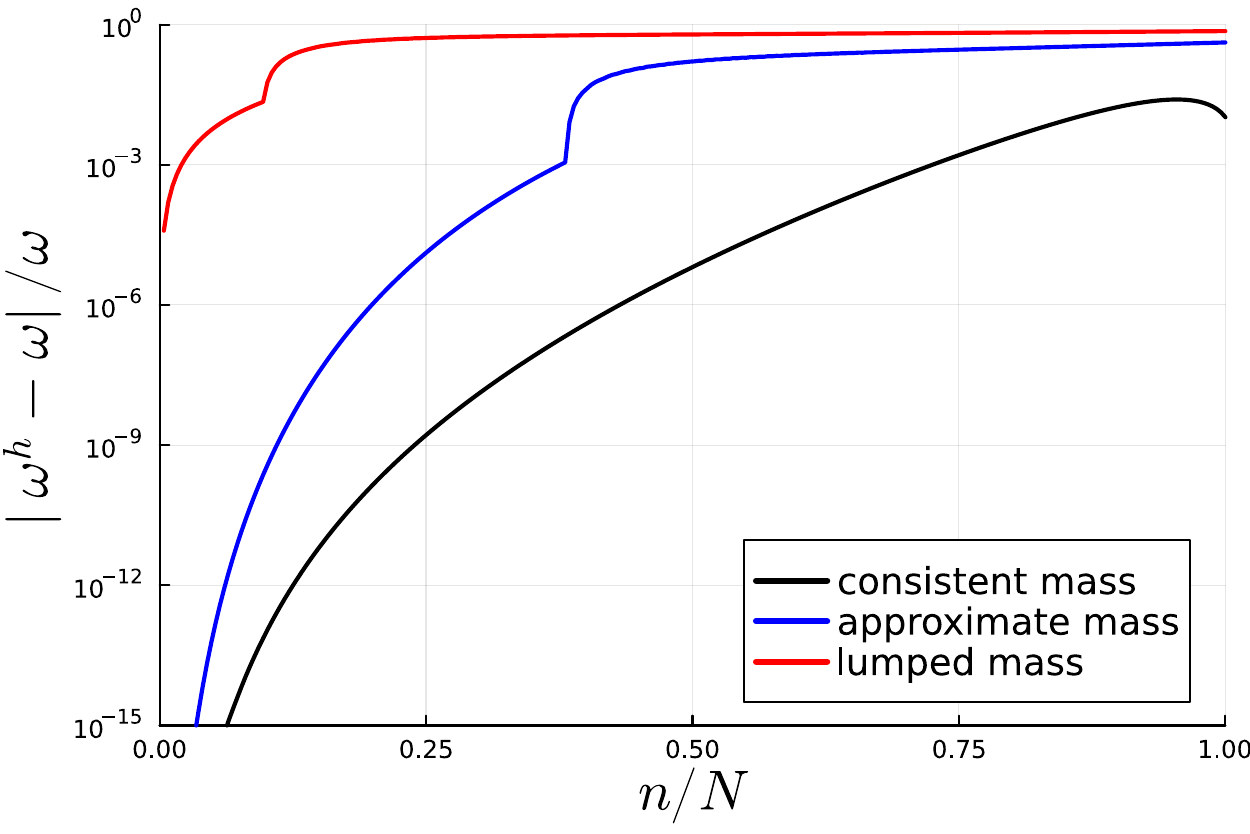}}	\\
	\caption{Normalized frequency errors associated with consistent mass (black), higher order approximated mass (blue), and rowsum lumped mass (red). The results are obtained for polynomial degrees $p=2-5$ with $N=250$. The outlier frequencies have been removed using the technique \Bl presented \B in \cite{Hiemstra_outlier_2021}.}
\label{fig:spectra2}
\end{figure}

\subsection{Linear analysis of free vibration of an annular membrane}
Consider the free vibration of an annular membrane with inner radius $a$ and outer radius $b$. The model is fixed at the boundaries. Let $J_4(r)$ denote the 4th Bessel function of the first kind and let $\lambda_k, \; k=1,2,\ldots$ denote its positive zeros. The radii of the annulus are chosen conveniently as certain zeros of $J_4(r)$. In particular $a = \lambda_2 \approx 11.065$ and $b = \lambda_4 \approx 17.616$. We define the following manufactured solution, which depends on both the radial coordinate $r$, the angular coordinate $\theta$, and time $t$
\begin{align}
	u(r,\theta,t) = J_4(r) \cdot \cos(\lambda_{2}  t) \cdot \cos(4\theta)
\end{align}
It may be verified that the function satisfies the differential equation for free vibration on the annulus with fixed boundary conditions at the inner and outer radii. Figure \ref{fig:problem description} shows the problem setup, the boundary conditions that are satisfied by $u$, and its initial condition $u(r,\theta,0)$.

We perform explicit dynamics with the consistent mass matrix, the higher order lumping technique, and standard rowsum lumping, and simulate one full period of the periodic function $u(r,\theta,t)$. In other words, the final time is $T = 2\pi / \lambda_{2}$. We use polynomial degrees $p = 3,4,5$ in combination with uniform refinement resulting in meshes with $n_{\text{elem}} = \List{ 8,16,32}$ elements in the radial coordinate and $2n_{\text{elem}} = \List{ 16,32,64}$ elements in the angular coordinate. We perform the simulations with and without the outlier removal technique presented in \cite{Hiemstra_outlier_2021}.
\begin{figure}
	\centering
	\begin{tikzpicture}
	\end{tikzpicture}
	\subfloat[Coarsest \Bezier mesh]{\includegraphics[trim = 0cm 0cm 0cm 0cm, clip,width=0.45\textwidth]{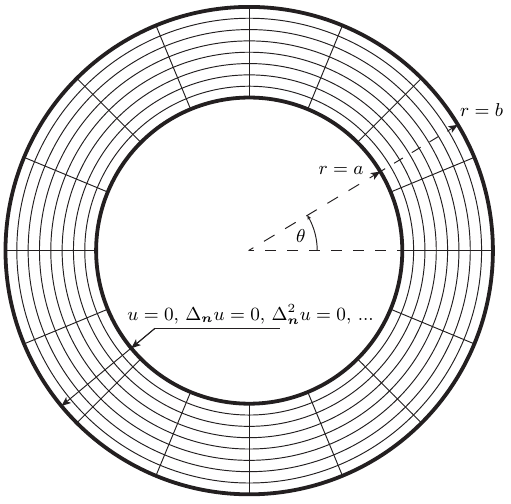}} \hspace{0.5cm}
	\subfloat[Initial displament field $u(r,\theta,0)$.]{\includegraphics[trim = 0cm 0cm 0cm 0cm, clip,width=0.45\textwidth]{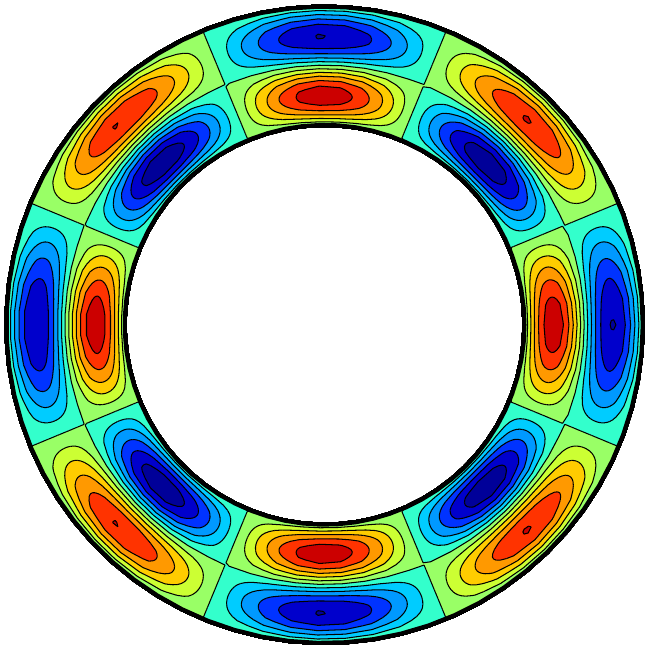}}
	\caption{Problem description for explicit dynamics on an annulus.}
\label{fig:problem description}
\end{figure}
For the higher order spatially accurate methods, namely with consistent mass or the higher order lumped mass, we discretize the semi-discrete equations in time using the \emph{RK4}-scheme when the polynomial degree is 3 or 4, and the \emph{RK6}-scheme when the polynomial degree is 5 (see \cite{Evans_corrector_scheme_2018} for an exposition  of the algorithms). With standard rowsum lumped mass we utilize the RK2 scheme. The time-step used is $50\%$ of the computed critical time-step of the respective time-discretization scheme ($\Delta_{\text{crit}}=C_{\text{max}} / \omega^h_{\text{max}}$, with $C_{\text{max}}=2.0$ for \emph{RK2}, $C_{\text{max}}=2.785$ for \emph{RK4} and $C_{\text{max}}=3.387$ for \emph{RK6}). The convergence behavior of the three methods, without and with outlier removal, is presented in Figure \ref{fig:convergence_annular_membrane} as a function of the square root of the total number of degrees of freedom. The higher order lumping technique preserves the accuracy that is obtained using \Bl the \B consistent mass. In addition, an outlier-free basis preserves optimal accuracy of the standard approach, however, using fewer degrees of freedom and larger time-steps. The improvement in the critical timestep can be observed in Figure \ref{fig:timestep}. Figure \ref{fig:timestep}a displays the improvement due to the choice of mass matrix (consistent, lumped, or higher order approximate mass) and Figure \ref{fig:timestep}b \Bl demonstrates \B the additional benefit of outlier removal. The compound effect is obtained by multiplying these numbers, which easily leads to a doubling of the critical timestep without loss of accuracy.

\begin{figure}
	\centering
	\subfloat[Without outlier removal]{\includegraphics[trim = 0cm 0cm 0cm 0cm, clip,width=0.65\textwidth]{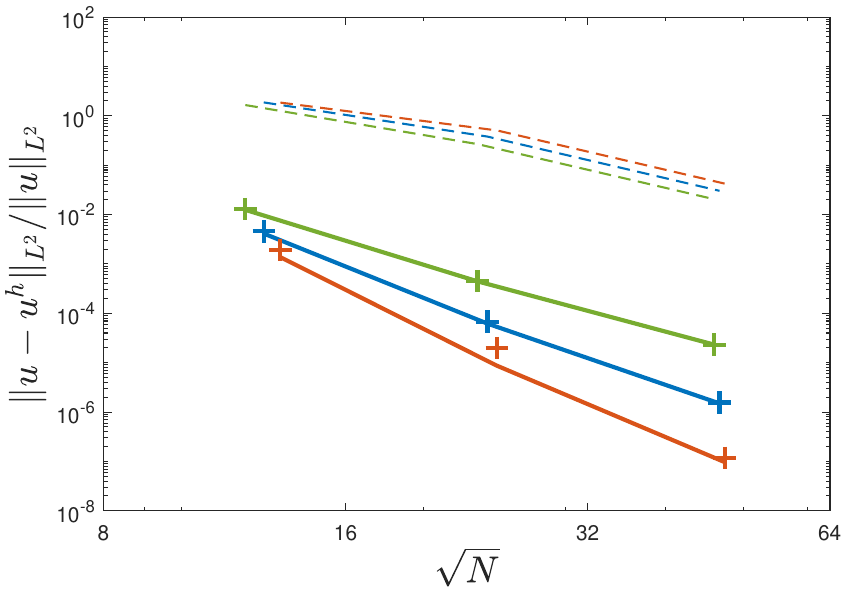}} \\
	\subfloat[With outlier removal]{\includegraphics[trim = 0cm 0cm 0cm 0cm, clip,width=0.65\textwidth]{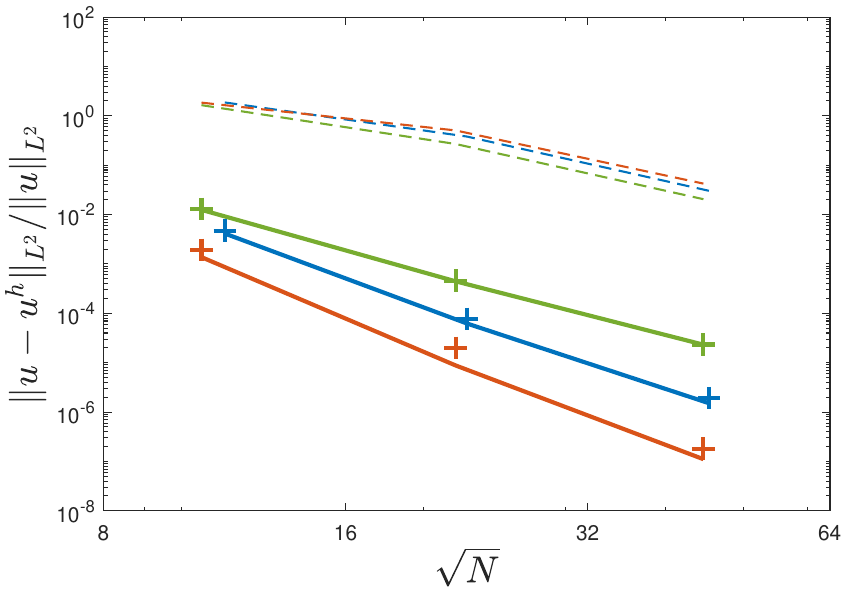}} \\
\begin{tikzpicture}
	\draw[green1,line width=2pt] (0,0) -- (0.5,0) node [right] {\scriptsize $\p=3$, consistent mass};
	\draw[blue1,line width=2pt] (5,0) -- (5.5,0) node [right] {\scriptsize $\p=4$, consistent mass};
	\draw[red1,line width=2pt] (10,0) -- (10.5,0) node [right] {\scriptsize $\p=5$, consistent mass};
	\draw[green1,line width=1pt] (0.25,-0.625) -- (0.25,-0.375);
	\draw[green1,line width=1pt] (0.125,-0.5) -- (0.375,-0.5);
	\node[green1] at (2.5,-0.5) {\scriptsize $\p=3$, higher order lumped mass};
	\draw[blue1,line width=1pt] (5.25,-0.625) -- (5.25,-0.375);
	\draw[blue1,line width=1pt] (5.125,-0.5) -- (5.375,-0.5);
	\node[blue1] at (7.5,-0.5) {\scriptsize $\p=4$, higher order lumped mass};
	\draw[red1,line width=1pt] (10.25,-0.625) -- (10.25,-0.375);
	\draw[red1,line width=1pt] (10.125,-0.5) -- (10.375,-0.5);
	\node[red1] at (12.5,-0.5) {\scriptsize $\p=5$, higher order lumped mass};
	\draw[dotted, green1,line width=1pt] (0,-1) -- (0.5,-1) node [right] {\scriptsize $\p=3$, lumped mass};
	\draw[dotted, blue1,line width=1pt] (5,-1) -- (5.5,-1) node [right] {\scriptsize $\p=4$, lumped mass};
	\draw[dotted, red1,line width=1pt] (10,-1) -- (10.5,-1) node [right] {\scriptsize $\p=5$, lumped mass};
	\node[green1] at (2,-1.5) {\scriptsize \emph{RK4}, $\Delta t = 0.5 \, \Delta t_{\text{crit}}$};
	\node[blue1] at (7,-1.5) {\scriptsize \emph{RK4}, $\Delta t = 0.5 \, \Delta t_{\text{crit}}$};
	\node[red1] at (12,-1.5) {\scriptsize \emph{RK6}, $\Delta t = 0.5 \, \Delta t_{\text{crit}}$};
\end{tikzpicture}
	\caption{Relative $L^2$ error in the vertical displacement field $u$ as a function of the square root of number of degrees of freedom $N$. Higher order mass lumping maintains the accuracy of the consistent mass, see Figure a. Outlier removal does not negatively affect the accuracy of the methods, see Figure b.}
\label{fig:convergence_annular_membrane}
\end{figure}

\begin{figure}
	\centering
	\subfloat[Benefit with respect to consistent mass]{\includegraphics[trim = 0cm 0cm 0cm 0cm, clip,width=0.48\textwidth]{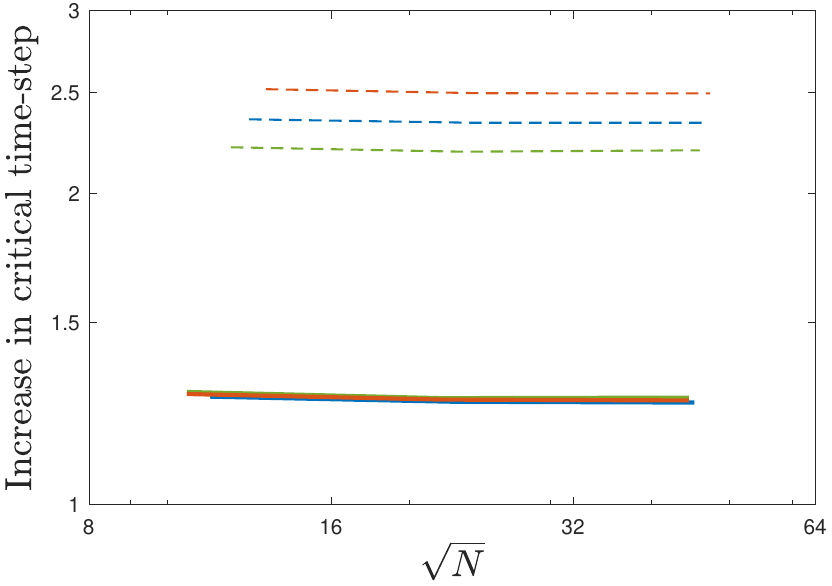}} \hspace{0.25 cm}
	\subfloat[Additional benefit of outlier removal]{\includegraphics[trim = 0cm 0cm 0cm 0cm, clip,width=0.48\textwidth]{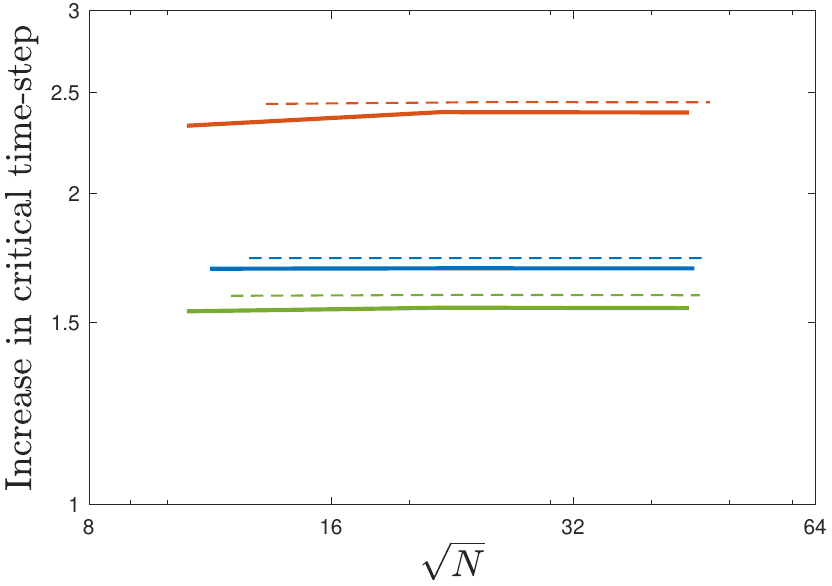}} \\
\begin{tikzpicture}	
	\draw[green1,line width=2pt] (0,0) -- (0.5,0) node [right] {\scriptsize $\p=3$, higher order lumped mass};
	\draw[blue1,line width=2pt] (5,0) -- (5.5,0) node [right] {\scriptsize $\p=4$, higher order lumped mass};
	\draw[red1,line width=2pt] (10,0) -- (10.5,0) node [right] {\scriptsize $\p=5$, higher order lumped mass};
	\draw[dotted,green1,line width=1pt] (0,-0.5) -- (0.5,-0.5) node [right] {\scriptsize $\p=3$, lumped mass};
	\draw[dotted,blue1,line width=1pt] (5,-0.5) -- (5.5,-0.5) node [right] {\scriptsize $\p=4$,  lumped mass};
	\draw[dotted,red1,line width=1pt] (10,-0.5) -- (10.5,-0.5) node [right] {\scriptsize $\p=5$,  lumped mass};
\end{tikzpicture}
	\caption{Increase of the critical time-step size of the (higher order) lumped mass compared with consistent mass (a) and the increase in timestep due to outlier removal (b). The compound effect of the chosen mass and outlier removal is obtained by multiplying these numbers.}
\label{fig:timestep}
\end{figure}
\section{Conclusion \label{sec:conclusion}}
This paper \Bl presents \B a comprehensive mathematical framework for explicit structural dynamics, utilizing approximate dual functionals and rowsum mass lumping. We \Bl demonstrate \B that the Petrov-Galerkin method with rowsum mass lumping proposed in \cite{nguyen2023} can also be interpreted as a Galerkin method with a customized mass matrix. Based on this observation, we \Bl prove \B mathematically that the customized mass technique does not compromise the asymptotic accuracy of the method. An essential improvement with respect to our prior work in \cite{nguyen2023} is the incorporation of Dirichlet boundary conditions while maintaining higher order accuracy. The mathematical results \Bl are \B substantiated by spectral analysis and validated through a two-dimensional linear explicit dynamics benchmark. Our method \Bl exhibits \B accuracy and robustness comparable with a Galerkin method employing consistent mass while retaining the explicit nature of lumped mass. The excellent coarse mesh accuracy of the approach paves the way for efficient explicit dynamics on coarse meshes with commensurable critical timesteps. In future work we intend to extend our results \Bl to \B non-linear problems and to multipatch configurations of complex geometric models found for example in the automotive industry.

\section*{Acknowledgments}

\section*{References}

\bibliographystyle{acm}
\bibliography{references}

\end{document}